\documentclass[11pt,a4paper,reqno]{amsart}
\usepackage[utf8]{inputenc}
\usepackage{amsmath,amssymb,amsthm}
\usepackage{geometry}
\usepackage{hyperref}
\hypersetup{
    colorlinks=true,
    linkcolor=blue,
    citecolor=blue,
    urlcolor=blue
}

\newtheorem{theorem}{Theorem}[section]
\newtheorem{theo}[theorem]{Theorem}         

\newtheorem{lemm}[theorem]{Lemma}           

\newtheorem{prop}[theorem]{Proposition}     

\theoremstyle{definition}

\theoremstyle{remark}

\newtheorem{rema}[theorem]{Remark}          

\numberwithin{equation}{section}

\title[Totally geodesic submanifolds of a complex flag]{Totally geodesic submanifolds of a complex flag}

\author[]{DANIELLE VELLOSO FERREIRA}
\address{Instituto de Matemática e Estatística, Universidade de São Paulo, Rua do Matão 1010, São Paulo, SP 05508-090, Brazil}
\email{dani.velloso@ime.usp.br}

\subjclass[2020]{Primary 53C30, 53C40; Secondary 53C28}
\keywords{Totally geodesic, complex flag, twistor space.}
\thanks{The author has been supported by FAPESP (grant 2025/00281-7)}

\begin{document}

\begin{abstract}
This work presents a classification of the maximal totally geodesic submanifolds of the complex manifold
\[
M = SU(n+2)/S(U(1)\times U(1)\times U(n)) \cong F_{1,2}(\mathbb{C}^{n+2}),\qquad n\geq 2,
\]
equipped with its standard homogeneous Riemannian metric. 
Using the natural fibration \( S^2 \to M \to G_2(\mathbb{C}^{n+2}) \) and curvature computations derived from O’Neill’s formulas, 
we identify curvature-invariant subspaces of the tangent representation. 
As a consequence, we obtain a complete description of the maximal totally geodesic submanifolds of \( M \). 
These include twistor spaces of lower-dimensional complex Grassmannians, real flag manifolds, products of complex projective spaces, 
and twistor spaces of quaternionic projective spaces.
\end{abstract}

\maketitle


\section{Introduction}

The main result of this article is the following.

\begin{theo}\label{Teorema}
Let $n\ge2$, and let $\Sigma$ be a maximal connected complete totally geodesic
submanifold of the partial complex flag manifold
\begin{equation*}
M=
\frac{SU(n+2)}{S(U(1)\times U(1)\times U(n))}
=
F_{1,2}(\mathbb C^{n+2}),
\end{equation*}
endowed with the normal homogeneous Riemannian metric.
Then $\Sigma$ is congruent to one of the following submanifolds:
\begin{itemize}
    \item a lower-dimensional partial complex flag manifold,
    $F_{1,2}(\mathbb C^{n+1})$;

    \item a partial real flag manifold,
    $F_{1,2}(\mathbb R^{n+2})$;

    \item a product of complex projective spaces,
    $\mathbb CP^p\times\mathbb CP^q$, with $p+q=n$;

    \item the twistor space of the quaternionic projective space
    $\mathbb HP^k$, namely $\mathbb CP^{2k+1}$,
    where $k=\lfloor n/2\rfloor$.
\end{itemize}

Moreover, all these submanifolds are reflective and well-positioned with
respect to the natural submersion to the complex Grassmannian of
$2$-planes~\eqref{submersão}.
\end{theo}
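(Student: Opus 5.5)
The classification reduces to an algebraic problem, because $M=G/K$ with $G=SU(n+2)$, $K=S(U(1)\times U(1)\times U(n))$ and the normal metric comes from $-B$. Write $\mathfrak g=\mathfrak k\oplus\mathfrak m$. Complete totally geodesic submanifolds through $o=eK$ correspond to Lie triple systems $\mathfrak v\subset\mathfrak m$ for the normal homogeneous structure. Concretely, $\mathfrak v$ is curvature invariant, $R(\mathfrak v,\mathfrak v)\mathfrak v\subset\mathfrak v$, with $R$ given by the usual formula
\[
R(X,Y)Z=-[[X,Y]_{\mathfrak k},Z]-\tfrac12[[X,Y]_{\mathfrak m},Z]_{\mathfrak m}-\tfrac14[[Y,Z]_{\mathfrak m},X]_{\mathfrak m}+\tfrac14[[X,Z]_{\mathfrak m},Y]_{\mathfrak m},
\]
and $\mathfrak v$ must also satisfy the condition that $\Sigma=\exp(\mathfrak v)$ is totally geodesic. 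Here it is safest to use the known criterion for naturally reductive spaces (Tojo): $\mathfrak v$ should be a curvature-invariant subspace that is also invariant under $\nabla R$ contracted appropriately, or equivalently $[\mathfrak v,\mathfrak v]_{\mathfrak m}$-terms close up.

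Decompose $\mathfrak m=\mathfrak m_{12}\oplus\mathfrak m_{13}\oplus\mathfrak m_{23}$ according to the three blocks. Here $\mathfrak m_{12}\cong\mathbb C$ is the fibre direction of $S^2\to M\to G_2(\mathbb C^{n+2})$, and $\mathfrak m_{13}\oplus\mathfrak m_{23}\cong\mathbb C^n\oplus\mathbb C^n$ is horizontal and identified with $T G_2(\mathbb C^{n+2})=\mathbb C^2\otimes\mathbb C^n$. O'Neill's formulas express $R$ on horizontal vectors through the Grassmannian curvature together with the $A$-tensor $A_XY=\tfrac12[X,Y]_{\mathfrak m_{12}}$.

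The plan is to split a Lie triple system $\mathfrak v$ by its position relative to the submersion. Set $\mathfrak v_0=\mathfrak v\cap\mathfrak m_{12}$ and let $\mathfrak v_h$ be the horizontal projection.

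The first step is to prove that every maximal $\mathfrak v$ is \emph{well-positioned}, meaning $\mathfrak v=\mathfrak v_0\oplus(\mathfrak v\cap\mathfrak h)$. For this I would use the $K$-action and the fact that $\operatorname{ad}$ of the centre of $\mathfrak k$ acts on $\mathfrak m_{12},\mathfrak m_{13},\mathfrak m_{23}$ with distinct characters. Curvature operators $R(X,Y)$ with $X,Y\in\mathfrak v$ generate a subalgebra of $\mathfrak k$ preserving $\mathfrak v$, and one shows that it contains enough of the centre to separate the components. This is the step I expect to be the main obstacle. The degenerate cases, where $R(\mathfrak v,\mathfrak v)$ is too small (for instance abelian $\mathfrak v$ lying in a flat), need a separate argument: either they are contained in a torus inside one of the listed examples, or a direct computation of $R(X,Y)X$ for mixed vectors produces components that must lie in $\mathfrak v$.

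Once $\mathfrak v$ is well-positioned, $\mathfrak v\cap\mathfrak h$ projects to a subspace $\mathfrak w$ of $T G_2(\mathbb C^{n+2})$. O'Neill's formula $\langle R(X,Y)Y,X\rangle=\langle \bar R(X,Y)Y,X\rangle-3|A_XY|^2$, together with the invariance conditions, forces $\mathfrak w$ to be a Lie triple system of $G_2(\mathbb C^{n+2})$. Here I invoke the known classification of maximal totally geodesic submanifolds of $G_2(\mathbb C^{n+2})$ (Klein): $G_2(\mathbb C^{n+1})$, $G_2(\mathbb R^{n+2})$, $\mathbb CP^p\times\mathbb CP^q$, $\mathbb HP^{k}$, and $\mathbb CP^{n+1}$, $S^2\times S^2$-type pieces, etc. I then split into cases according to whether $\mathfrak v_0=\mathfrak m_{12}$, $\mathfrak v_0$ is a line, or $\mathfrak v_0=0$. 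In each case I check whether $A_{\mathfrak w}\mathfrak w$ lands in $\mathfrak v_0$. If $\mathfrak v_0=\mathfrak m_{12}$, the lift is the full preimage, and one obtains the twistor spaces: $F_{1,2}(\mathbb C^{n+1})$ over $G_2(\mathbb C^{n+1})$ and $\mathbb CP^{2k+1}$ over $\mathbb HP^k$. If $\mathfrak v_0$ is a line, the real flag $F_{1,2}(\mathbb R^{n+2})$ appears over $G_2(\mathbb R^{n+2})$. If $\mathfrak v_0=0$, horizontality requires $A_{\mathfrak w}\mathfrak w=0$, and this yields $\mathbb CP^p\times\mathbb CP^q$ embedded horizontally. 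The remaining candidates, such as horizontal lifts over $\mathbb CP^{n+1}$-type factors, are discarded either because $A\neq0$ or because they are contained in one of the above.

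Finally, maximality among the resulting list is checked by comparing dimensions and inclusions. Reflectivity is verified by exhibiting in each case an involution of $SU(n+2)$ fixing $K$ whose fixed-point component at $o$ is $\Sigma$, with $\mathfrak m$ splitting as $\mathfrak v\oplus\mathfrak v^\perp$ into its $\pm1$ eigenspaces. The involutions are complex conjugation (real flag), conjugation by a diagonal sign matrix (products and $F_{1,2}(\mathbb C^{n+1})$), and the quaternionic structure composed with conjugation (the $\mathbb CP^{2k+1}$ case). One also checks that $\mathfrak v^\perp$ is a Lie triple system in each case.
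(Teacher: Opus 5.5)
\textbf{Main gap: Step 1 (well-positionedness).} As you anticipated, this is where the proof fails, and the mechanism you sketch cannot work. On this naturally reductive but non-symmetric space, the Levi-Civita curvature operators $R(X,Y)$, $X,Y\in\mathfrak v$, are not of the form $\operatorname{ad}(\xi)|_{\mathfrak m}$ with $\xi$ in the isotropy algebra $\mathfrak k$ (your notation). Take $V=X_{12}\in\mathfrak m_{12}$ and $X=X_{13}\in\mathfrak m_{13}$. Your own curvature formula gives $R(V,X)X=\tfrac14V$, so $R(V,X)$ moves $\mathfrak m_{13}$ into $\mathfrak m_{12}$. Every element of $\operatorname{ad}(\mathfrak k)$, by contrast, preserves $\mathfrak m_{12}$, $\mathfrak m_{13}$ and $\mathfrak m_{23}$. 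So the curvature operators do not generate a subalgebra of $\mathfrak k$, and nothing forces a central element of $\mathfrak k$ to preserve $\mathfrak v$.

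Your argument also never uses maximality. The paper's analysis produces non-well-positioned candidates that pass all the $R$- and $\nabla R$-conditions it checks: $\operatorname{span}\{X_{13},\,V+\delta J_WX_{13},\,W-\delta J_VX_{13}\}$ with $\delta=\pm1$, and the surfaces $\operatorname{span}\{V+aX_{13}+dX_{24},\,W-aY_{13}+dY_{24}\}$ with $a^2=d^2=\tfrac34$. The paper discards them only because they lie in $T_oF(\mathbb C^3)$ or in a totally geodesic $\mathbb CP^3$, and your sketch has nothing that would exclude them.

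What is missing is the paper's mechanism (the paper writes $\mathfrak s$ for your $\mathfrak v$ and $\mathfrak p$ for the horizontal space).
\begin{itemize}
\item First, using the eigenvalues $\lambda_i^{\pm}$ of $R_X$ for regular $X\in\mathfrak a=\operatorname{span}\{X_{13},X_{24}\}$ and the eigenspace-projection consequence of Tojo's criterion, show that $\mathfrak s\cap\mathfrak p\neq0$ forces a singular vector $X$ into $\mathfrak s$. Then $\mathfrak s$ is spanned by eigenvectors of $R_X$.
\item For $X=\tfrac{\sqrt2}{2}(X_{13}+X_{24})$, every eigenspace of $R_X$ is purely vertical or purely horizontal, so well-positionedness is automatic.
\item For $X=X_{13}$, the only mixed eigenspace is $\operatorname{span}\{V,W,J_VX,J_WX\}$. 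Explicit $R$- and $\nabla R$-computations then confine a non-well-positioned $\mathfrak s$ to $T_oF(\mathbb C^3)$.
\item The case $\mathfrak s\cap\mathfrak p=0$ is a surface. It is treated with the surface criterion of Lorenzo-Naveiro and Rodr\'{\i}guez-V\'azquez and a nontrivial algebraic system.
\end{itemize}

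\textbf{Step 2: a different route with holes.} Reducing to the classification in the base is a genuinely different route from the paper, which deliberately avoids the Grassmannian classification. It could be made to work, but as written it has several holes.
\begin{itemize}
\item The sectional-curvature identity alone does not make $\mathfrak w$ a Lie triple system. You need, for example, that horizontal geodesics of $\Sigma$ project to geodesics, or the full O'Neill formula for $R(X,Y)Z$.
\item More seriously, the classification of \emph{maximal} totally geodesic submanifolds of $G_2(\mathbb C^{n+2})$ is not enough. $\pi(\Sigma)$ need not be maximal, and existence of a totally geodesic lift of a given type over $B'$ does not pass to $B''\supset B'$. For instance, a fibre is the full preimage of a point, but $\pi^{-1}(\mathbb CP^p\times\mathbb CP^q)$ is not totally geodesic. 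You therefore need the classification of all totally geodesic submanifolds of the base, and no ``etc.''.
\item The test $A_{\mathfrak w}\mathfrak w\subset\mathfrak v_0$ is only necessary. It does not exclude a lift of $\mathbb CP^p\times\mathbb CP^q$ with $\mathfrak v_0=\mathbb RU$, because there $A_{\mathfrak w}\mathfrak w=0$. That candidate dies because $X_{13}$ lies in the $0$-eigenspace of $R_{X_{13}}$ while $-\nabla_{X_{13}}U=\tfrac12J_UX_{13}$ lies in the $\tfrac14$-eigenspace. Tojo's criterion then forces $J_UX_{13}\in\mathfrak v$, although $J_UX_{13}\perp\mathfrak w$.
\item Tojo's criterion is $R$-invariance of $e^{\nabla X}\mathfrak v$ for all $X\in\mathfrak v$. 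It is not equivalent to a closure condition on $[\mathfrak v,\mathfrak v]_{\mathfrak m}$, and curvature invariance alone is not enough here: $\mathfrak p$ is $R$-invariant but not tangent to any totally geodesic submanifold.
\end{itemize}
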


Our work extends to the complex setting the classification obtained in
\cite{GKR} for real Stiefel manifolds and generalizes, to arbitrary
dimension, part of the classification established in \cite{RN} for
$F(\mathbb C^3)$ and $\mathbb CP^3$.

The proof combines the geometry of the twistor fibration with a detailed
analysis of the Jacobi operators associated with singular tangent vectors with respect to the isotropy representation of the complex Grassmannian.
After diagonalizing these operators, we exploit the invariance of the
tangent spaces under both the curvature tensor and its first covariant
derivative to determine all possible maximal totally geodesic
submanifolds.

The existence of totally geodesic submanifolds is closely related to the amount of symmetry of the ambient manifold. For this reason, they have been studied extensively in highly symmetric spaces, particularly in the settings of Riemannian symmetric spaces and homogeneous spaces, where the ambient geometry provides powerful tools for their construction and classification.

The study of totally geodesic submanifolds dates back to Hadamard, who introduced the notion at the beginning of the twentieth century. The subject was later placed on a firm theoretical foundation by Cartan through his existence theorem for totally geodesic submanifolds in real analytic manifolds. In the setting of Riemannian symmetric spaces, the classification program began with Wolf's work on rank-one symmetric spaces and was substantially extended by Chen and Nagano, who classified maximal totally geodesic submanifolds in compact symmetric spaces.

For symmetric spaces, totally geodesic submanifolds are characterized by an algebraic condition: their tangent spaces are Lie triple systems. Although this provides a complete algebraic characterization, applying this criterion in practice is often highly nontrivial, and the classification problem remains difficult except for particular families of symmetric spaces.

Beyond the symmetric setting, however, no analogous algebraic characterization is available. Tojo's criterion extends the Lie triple system characterization to naturally reductive homogeneous spaces, where totally geodesic submanifolds are characterized by the interaction between the curvature tensor and the connection.

\begin{theo}[{\cite{T}} -- Tojo's Criterion]\label{Tojo}
A subspace $\mathfrak{s}\subset\mathfrak{m}$ is tangent to a totally
geodesic submanifold if and only if
$e^{\nabla X}(\mathfrak{s})$
is $R$-invariant for every $X\in\mathfrak{s}$.
\end{theo}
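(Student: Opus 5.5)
The plan is to reduce the statement to Cartan's existence theorem for totally geodesic submanifolds in the real-analytic category. I use it in the refined form due to Hermann: a subspace $\mathfrak{s}\subset T_oM$ is tangent to a (complete) totally geodesic submanifold if and only if, for every $X\in\mathfrak{s}$ and every $t$, the parallel translate $P_t(\mathfrak{s})$ along $\gamma_X(t)=\exp_o(tX)$ is invariant under the curvature tensor $R_{\gamma_X(t)}$. For the homogeneous metrics here everything is real analytic, so this criterion applies. The task is then to make parallel transport along geodesics explicit on a naturally reductive space $G/H$ with reductive decomposition $\mathfrak{g}=\mathfrak{h}\oplus\mathfrak{m}$. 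In that explicit form, the condition ``$P_t(\mathfrak{s})$ is $R$-invariant for all $t$'' should become exactly ``$e^{\nabla X}(\mathfrak{s})$ is $R$-invariant for all $X\in\mathfrak{s}$''.

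\emph{Step 1: geodesics.} Natural reductivity means $\langle [X,Y]_{\mathfrak m},Z\rangle+\langle Y,[X,Z]_{\mathfrak m}\rangle=0$ for $X,Y,Z\in\mathfrak m$. I will use this identity to show that the geodesics through $o$ are the orbits $\gamma_X(t)=\exp(tX)\cdot o$ for $X\in\mathfrak m$.

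\emph{Step 2: comparison with the canonical connection.} The Levi-Civita connection differs from the canonical connection $\nabla^c$ by the tensor $D_X Y=\tfrac12[X,Y]_{\mathfrak m}$. By natural reductivity, $D_X$ is skew-symmetric. Parallel transport for $\nabla^c$ along $\gamma_X$ is simply the differential $d\tau_{\exp(tX)}$. Since $D$ is $\nabla^c$-parallel, the Levi-Civita transport is obtained by solving a constant-coefficient linear ODE in the frame transported by $\nabla^c$. Its solution is
\[
P_t = d\tau_{\exp(tX)}\circ e^{-tD_X}.
\]
The operator denoted $e^{\nabla X}$ in the statement is this exponential $e^{\pm tD_X}$, with the parameter $t$ absorbed into $X$. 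Because $\mathfrak s$ is a linear subspace, replacing $X$ by $tX$ and by $-X$ causes no loss, so the sign convention does not matter.

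\emph{Step 3: transfer back to $o$.} The map $\tau_{\exp(tX)}$ is an isometry, so it carries $R_o$ to $R_{\gamma_X(t)}$. Hence $P_t(\mathfrak s)$ is $R_{\gamma_X(t)}$-invariant if and only if $e^{-tD_X}(\mathfrak s)$ is $R_o$-invariant. Running over all $t\in\mathbb R$ and all $X\in\mathfrak s$, this is exactly the condition in the statement. Combined with Hermann's criterion, this gives both implications of the theorem.

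The main obstacle is Step 2. One must verify carefully that $D$ is parallel for the canonical connection, which uses the $\mathrm{Ad}(H)$-invariance of $[\cdot,\cdot]_{\mathfrak m}$ and the fact that $\nabla^c$ has parallel torsion. Only then does the transport equation genuinely become constant-coefficient in the $\nabla^c$-frame. A secondary point is to quote Hermann's version of Cartan's theorem correctly. Its sufficiency direction requires only curvature invariance along geodesics from $o$, and no higher covariant derivatives; analyticity is what makes this work.
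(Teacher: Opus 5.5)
Your argument is correct and is the standard proof of Tojo's criterion: combine Cartan's curvature-invariance characterization with the explicit Levi-Civita parallel transport along $\exp(tX)\cdot o$ in a naturally reductive space. The paper does not reprove the statement and only cites Tojo, but in Section~\ref{flags} it invokes exactly your Step~2 transport formula, written there as $d(L_{\exp(tX)})^{-1}\tau_t=e^{-t\nabla X^{\ast}}$.

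One point of emphasis needs correcting. Cartan's local theorem already holds in the smooth category. Analyticity is needed only to pass from short geodesics to all $t$, and to extend to a complete submanifold. The first of these is automatic in your Step~3, because $t\mapsto e^{tD_X}R_o(e^{-tD_X}u,e^{-tD_X}v)e^{-tD_X}w$ is real-analytic in $t$.
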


Although general, Tojo's criterion is not always straightforward to
apply, since it involves simultaneously the curvature tensor and the
connection. In the present setting, the natural twistor fibration
allows us to exploit additional geometric structure and reduce the
problem to a finite-dimensional algebraic analysis.

Along the proof of the Theorem~\ref{Teorema}, we obtain two
further classification results. The first concerns the twistor spaces of $\mathbb{H}P^k$,
$\mathbb CP^{2k+1}$.

\begin{theo}\label{tg.cp2k+1}
Let $k\geq2$ and let $\Sigma$ be a maximal connected complete totally
geodesic submanifold of
\[
\mathbb CP^{2k+1}
=
\frac{Sp(k+1)}{S^1\times Sp(k)}
\]
with the normal homogeneous Riemannian metric.
Then $\Sigma$ is congruent to one of the following submanifolds:
\begin{itemize}
    \item a lower-dimensional twistor space of $\mathbb HP^k$,
    namely $\mathbb CP^{2k-1}$;

    \item a complex projective space $\mathbb CP^k$;

    \item a Berger sphere quotient
    $\mathbb RP^{2k+1}_{\mathbb C,1/2}(\sqrt2)$.
\end{itemize}
Moreover, all these submanifolds are reflective and well-positioned with
respect to the submersion~\eqref{submersão}.
\end{theo}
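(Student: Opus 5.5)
We would prove Theorem~\ref{tg.cp2k+1} by using the twistor fibration $S^2\to\mathbb CP^{2k+1}\to\mathbb HP^k$, with the normal metric induced from $Sp(k+1)$. Write the reductive decomposition $\mathfrak{sp}(k+1)=\mathfrak h\oplus\mathfrak m$ with $\mathfrak h=\mathfrak u(1)\oplus\mathfrak{sp}(k)$ and $\mathfrak m=\mathfrak v\oplus\mathfrak p$. Here $\mathfrak v\cong\mathbb C$ is the vertical space tangent to the $S^2$ fibre, and $\mathfrak p\cong\mathbb H^k$ is the horizontal space, identified with the tangent space of $\mathbb HP^k$. Since the metric is normal homogeneous, it is naturally reductive, so Theorem~\ref{Tojo} applies.

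\textbf{Step 1: the curvature tensor.} We compute $R$ on $\mathfrak m$ from the naturally reductive formula
\[
R(X,Y)Z=-[[X,Y]_{\mathfrak h},Z]-\tfrac12[[X,Y]_{\mathfrak m},Z]_{\mathfrak m}-\tfrac14[X,[Y,Z]_{\mathfrak m}]_{\mathfrak m}+\tfrac14[Y,[X,Z]_{\mathfrak m}]_{\mathfrak m},
\]
and check it against O'Neill's formulas for the submersion onto $\mathbb HP^k$.

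\textbf{Step 2: diagonalising the Jacobi operators.} The isotropy group $S^1\times Sp(k)$ acts on $\mathfrak v\oplus\mathfrak p$. Using this action, every unit vector can be brought to the normal form
\[
X=\cos t\,V+\sin t\,P,
\]
with $V\in\mathfrak v$ and $P\in\mathfrak p$ fixed, and $P$ lying in a fixed quaternionic line. We then diagonalise the Jacobi operator $R_X=R(\cdot,X)X$ as a function of $t$. The endpoints $t=0,\pi/2$ are the singular directions, and generic $t$ behaves differently.

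\textbf{Step 3: restricting the tangent space.} Let $\mathfrak s$ be the tangent space of a maximal totally geodesic $\Sigma$. Then $\mathfrak s$ must be invariant under $R_X$ and under $\nabla R$ for every $X\in\mathfrak s$; the latter is the first-order consequence of $e^{\nabla X}(\mathfrak s)$ being $R$-invariant. Hence $\mathfrak s$ is a sum of eigenspaces of every such $R_X$. We split into cases according to the position of $\mathfrak s$ relative to the submersion:
\begin{itemize}
\item[(a)] $\mathfrak s\supseteq\mathfrak v$. Then the horizontal part must be a quaternionic subspace $\mathbb H^{k-1}$, giving the lower twistor space $\mathbb CP^{2k-1}$.
\item[(b)] $\mathfrak s\cap\mathfrak v$ is one-dimensional. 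Then the horizontal part is totally real for the relevant complex structures, and its projection is a totally geodesic $\mathbb RP^k\subset\mathbb HP^k$ or $\mathbb CP^k\subset\mathbb HP^k$. Checking compatibility with the vertical direction should yield the Berger sphere quotient $\mathbb RP^{2k+1}_{\mathbb C,1/2}(\sqrt2)$, which lies over $\mathbb CP^k$ with circle fibres.
\item[(c)] $\mathfrak s$ is horizontal. Then $\mathfrak s$ must be a complex subspace, giving a horizontal $\mathbb CP^k$; this is the horizontal lift of $\mathbb CP^k\subset\mathbb HP^k$, i.e.\ a Legendrian-type totally geodesic section.
\end{itemize}
The main claim in this step is that every $\mathfrak s$ is \emph{well-positioned}, meaning $\mathfrak s=(\mathfrak s\cap\mathfrak v)\oplus(\mathfrak s\cap\mathfrak p)$. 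To prove it, we would first apply $R_X$ for a generic mixed $X\in\mathfrak s$ and use the distinct eigenvalues from Step~2 to separate the vertical and horizontal components of $\mathfrak s$. This should be the main obstacle. If curvature invariance alone does not separate the components, we would use invariance under $\nabla_X R$, where the torsion term $[X,\cdot]_{\mathfrak m}$ mixes $\mathfrak v$ and $\mathfrak p$ in a controlled way.

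\textbf{Step 4: realisation, reflectivity and maximality.} Once $\mathfrak s$ is well-positioned, its projection to $\mathfrak p$ must be a Lie triple system of $\mathbb HP^k$. By Wolf's classification this is $\mathbb HP^j$, $\mathbb CP^j$ or $\mathbb RP^j$. For each case we determine which vertical part is compatible, and realise each surviving candidate as the orbit of a symmetric subgroup: $Sp(k)\times Sp(1)$, $U(k+1)$ and $SO(k+1)\times S^1$ respectively. Reflectivity follows by exhibiting the corresponding involution of $Sp(k+1)$ that commutes with the isotropy group. Finally, we prove maximality by comparing dimensions and inclusions among the three families.
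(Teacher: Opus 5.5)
\textbf{There is a genuine gap in Step~3, at the point you yourself flag as the main obstacle.}

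Your intrinsic route differs from the paper's. You use the normal form $X=\cos t\,V+\sin t\,P$ under $S^1\times Sp(k)$, then project well-positioned tangent spaces to Lie triple systems of $\mathbb{H}P^k$ and apply Wolf's classification. The paper instead deduces the theorem from its classification in $F_{1,2}(\mathbb{C}^{2k+2})$, where $\mathbb{C}P^{2k+1}$ sits as a reflective submanifold.

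The failing step is your plan to prove well-positionedness by separating eigenvalues of $R_X$ (or of $\nabla_XR$) for a mixed $X\in\mathfrak{s}$. That argument uses only invariance conditions, never maximality. So it would show that \emph{every} totally geodesic submanifold is well-positioned, and that is false.

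Here is a counterexample. Take the irreducible $SU(2)\subset Sp(2)$ (spin $3/2$ on $\mathbb{C}^4$) acting on a totally geodesic $\mathbb{C}P^3\subset\mathbb{C}P^{2k+1}$. For a suitable base point its Lie algebra is $\mathbb{R}D\oplus\operatorname{span}\{Z_1,Z_2\}$. Here $D\in\mathfrak h$ is diagonal, $Z_1=V+\tfrac{\sqrt3}{2}(X_{13}+X_{24})$ and $Z_2=W+\tfrac{\sqrt3}{2}(Y_{24}-Y_{13})$; these are exactly the vectors that appear in Section~\ref{nbp}. Geodesics through $o$ are $\exp(tY)\cdot o$ with $Y\in\mathfrak m$, so this orbit is a totally geodesic $S^2$. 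Its tangent plane meets neither $\mathfrak v$ nor $\mathfrak p$, yet it passes every $R$- and $\nabla R$-test you plan to apply.

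Note also that your argument never uses $k\ge2$. For $k=1$ this $S^2$ is a maximal, non-well-positioned totally geodesic submanifold of $\mathbb{C}P^3$. That is precisely why the theorem starts at $k=2$.

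\textbf{How to repair it.} You need a different case split in which maximality and $k\ge2$ actually enter.
\begin{itemize}
\item Suppose $\mathfrak{s}$ contains a nonzero horizontal vector $P$ (your $t=\pi/2$). Then $\mathfrak v$ is exactly the $\tfrac14$-eigenspace of $R_P$ and every other eigenspace is horizontal. So $R_P$-invariance alone makes $\mathfrak{s}$ well-positioned, and your Step~4 applies.
\item Otherwise $\mathfrak{s}\cap\mathfrak p=\{0\}$, so $\dim\mathfrak{s}\le2$. In the two-dimensional case $\mathfrak{s}=\operatorname{span}\{V+h_1,W+h_2\}$ with $h_1,h_2\in\mathfrak p$.
\item For $k\ge3$, $h_1$ and $h_2$ lie in a quaternionic subspace of dimension $k-1$. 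Hence $\Sigma$ lies in a lower twistor space $\mathbb{C}P^{2k-1}$ and is not maximal.
\item For $k=2$ this dimension count fails. You must show that $h_1,h_2$ span a single quaternionic line, so that $\Sigma\subseteq\mathbb{C}P^3\subsetneq\mathbb{C}P^5$. This is the computation the paper carries out in Section~\ref{nbp} via Proposition~\ref{prop.surface}.
\end{itemize}
With this piece added your route works, and it has the advantage of reducing the well-positioned case cleanly to Wolf's list.

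\textbf{Smaller corrections.}
\begin{itemize}
\item $R_X$-invariance gives $\mathfrak{s}=\bigoplus_\lambda(\mathfrak{s}\cap V_\lambda)$, not that $\mathfrak{s}$ is a sum of whole eigenspaces. The horizontal $\mathbb{C}P^k$ already violates the latter.
\item Wolf's list for $\mathbb{H}P^k$ also contains $S^3\subset\mathbb{H}P^1$.
\item In case (b), let $U$ span $\mathfrak{s}\cap\mathfrak v$. The $\nabla$-argument forces the horizontal part to be $J_U$-complex, so the $\mathbb{R}P^k$ projection does not occur there.
\end{itemize}
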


The case $k=1$ was completely classified by
Lorenzo-Naveiro and Rodríguez-Vázquez~\cite{RN}.

The second result concerns the real partial flag manifolds.

\begin{theo}\label{real.flag}
Let $n\geq2$ and let $\Sigma$ be a maximal connected complete totally
geodesic submanifold of
\[
F_{1,2}(\mathbb R^{n+2})
=
\frac{SO(n+2)}{S(O(1)\times O(1)\times O(n))}.
\]
Then $\Sigma$ is congruent to one of the following submanifolds:
\begin{itemize}
    \item a lower-dimensional real partial flag manifold
    $F_{1,2}(\mathbb R^{n+1})$;

    \item a product
    $\mathbb RP^p\times\mathbb RP^q$,
    where $p+q=n$;

    \item a Berger sphere quotient
    $\mathbb RP^{2k+1}_{\mathbb C,1/2}(\sqrt2)$,
    where $k=\lfloor n/2\rfloor$.
\end{itemize}
Moreover, all these submanifolds are well-positioned with respect to
the submersion~\eqref{submersão}.
\end{theo}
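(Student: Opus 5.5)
We prove Theorem~\ref{real.flag} with the same scheme as Theorem~\ref{Teorema}, now for the real twistor-type fibration
\[
S^1\cong SO(2)/S(O(1)\times O(1)) \longrightarrow F_{1,2}(\mathbb R^{n+2}) \longrightarrow \widetilde G_2(\mathbb R^{n+2})/\mathbb Z_2 = G_2(\mathbb R^{n+2}).
\]
Write the reductive decomposition $\mathfrak{so}(n+2)=\mathfrak k\oplus\mathfrak m$ with $\mathfrak k=\mathfrak{so}(n)$ and $\mathfrak m=\mathfrak v\oplus\mathfrak h$. Here $\mathfrak v=\mathbb R E_{12}$ is the vertical line. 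The horizontal part is $\mathfrak h=\mathbb R^n\oplus\mathbb R^n$, whose two summands are the rows $e_1\wedge v$ and $e_2\wedge v$; it is the tangent space of the real Grassmannian, which is symmetric of rank $2$ when $n\ge2$.

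\textbf{Step 1: curvature.} Using O'Neill's formulas for this normal homogeneous metric, write the curvature of $M$ in terms of three ingredients: the symmetric curvature $[[X,Y],Z]$ of $G_2(\mathbb R^{n+2})$ on $\mathfrak h$, the $A$-tensor $A_XY=\tfrac12[X,Y]_{\mathfrak v}$, and the vertical fibre term. Every horizontal vector is $\mathrm{Ad}(K)$-conjugate to a vector in a maximal abelian subspace $\mathfrak a$ of $\mathfrak h$, of the form $X=a\,e_1\wedge e_3+b\,e_2\wedge e_4$. So it suffices to compute the Jacobi operators $R_X=R(\cdot,X)X$ for such $X$ and for $X+tE_{12}$. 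We diagonalise them for regular $X$ and for the singular directions $a=0$, $b=0$ and $a=\pm b$. The eigenspaces split along the restricted roots $\pm\epsilon_1,\pm\epsilon_2,\pm\epsilon_1\pm\epsilon_2$, and the $A$-tensor couples the $\epsilon_1\pm\epsilon_2$ root spaces with $\mathfrak v$.

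\textbf{Step 2: well-positioned reduction.} Let $\mathfrak s$ be the tangent space of $\Sigma$ at $o$. First show that $\mathfrak s=(\mathfrak s\cap\mathfrak v)\oplus(\mathfrak s\cap\mathfrak h)$. To do so, take $X+tE_{12}\in\mathfrak s$ and combine $R$-invariance of $\mathfrak s$ with the invariance of $\nabla R$ that follows from Theorem~\ref{Tojo} (differentiate $e^{s\nabla X}$ at $s=0$). The resulting $R_{X+tE_{12}}$-eigenvector components force the vertical and horizontal parts to lie separately in $\mathfrak s$. This gives two cases.

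\textbf{Case (a): $E_{12}\notin\mathfrak s$.} Then $\mathfrak s\subset\mathfrak h$ is a Lie triple system of the Grassmannian with vanishing $A$-tensor, that is $[\mathfrak s,\mathfrak s]_{\mathfrak v}=0$. The maximal totally geodesic submanifolds of the real Grassmannian $G_2(\mathbb R^{n+2})$ are known (Chen--Nagano, Klein). Testing each against the condition $[\mathfrak s,\mathfrak s]_{\mathfrak v}=0$ leaves two families. The first is $\mathbb R^p\oplus\mathbb R^q$, with $e_1\wedge\mathbb R^p$ and $e_2\wedge\mathbb R^q$ on orthogonal subspaces, which gives $\mathbb RP^p\times\mathbb RP^q$. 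The second is the "complex" family $\{e_1\wedge v+e_2\wedge Jv\}$ for a complex structure $J$ on $\mathbb R^{2k}\subset\mathbb R^n$, together with the $E_{12}$ direction. This second family actually belongs to case (b).

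\textbf{Case (b): $E_{12}\in\mathfrak s$.} Then $\mathfrak s\cap\mathfrak h$ is invariant under $\mathrm{ad}(E_{12})$, which rotates $e_1\wedge v\mapsto e_2\wedge v$. So $\mathfrak s\cap\mathfrak h=W\oplus W$ for some $W\subset\mathbb R^n$, which gives $F_{1,2}(\mathbb R^{m+2})$ and, for maximality, $F_{1,2}(\mathbb R^{n+1})$. Alternatively, $\mathfrak s\cap\mathfrak h$ is the graph of an $E_{12}$-equivariant map, and $\nabla R$-invariance forces this map to be a complex structure on $\mathbb R^{2k}$. In that case the orbit is $U(k+1)/U(k)$ acting on $S^{2k+1}$ modulo $\pm1$, with the Berger metric induced by the normal metric; computing the fibre length gives $\mathbb RP^{2k+1}_{\mathbb C,1/2}(\sqrt2)$.

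\textbf{Completion.} Each candidate $\mathfrak s$ is realised as the orbit of a subgroup ($SO(n+1)$, $SO(p+1)\times SO(q+1)$, or $U(k+1)$), or as a fixed-point component, so it is totally geodesic. Well-positionedness holds by construction, since each $\mathfrak s$ splits into vertical and horizontal parts. Maximality follows by comparing the list for inclusions.

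\textbf{Main obstacle.} I expect Step 2 together with the $\nabla R$ analysis in case (b) to be the main obstacle. The Jacobi-operator spectra of regular vectors alone do not exclude mixed subspaces, so the first derivative of curvature must be used carefully, along singular directions $a=\pm b$, to rule out subspaces $\mathfrak s$ that are not split.
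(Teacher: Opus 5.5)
Your route differs from the paper's, which does no curvature analysis for this theorem. The paper imports the three maximal tangent spaces from the classification for the Stiefel manifold $V_2(\mathbb R^{n+2})$ in \cite{GKR}, pushes them through the Riemannian covering $V_2(\mathbb R^{n+2})\to F_{1,2}(\mathbb R^{n+2})$, and identifies the orbits.

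A direct argument could replace this, but yours has a genuine gap at Step~2, and Cases~(a) and~(b) inherit it. The assertion that $R$- and $\nabla R$-invariance force $\mathfrak s$ to split is false. The orbit $SO(3)\cdot o=F_{1,2}(\mathbb R^3)$ is totally geodesic: it is the fixed component of conjugation by $\operatorname{diag}(I_3,-I_{n-1})$. Its tangent space is $\operatorname{span}\{X_{12},X_{13},X_{23}\}=\mathfrak{so}(3)$, and the normal metric restricts to a bi-invariant one there. So it has constant curvature, and every $2$-plane in it is tangent to a totally geodesic surface of $M$. This gives three counterexamples:
\begin{itemize}
\item $\operatorname{span}\{\cos s\,X_{12}+\sin s\,X_{23},X_{13}\}$ with $0<s<\pi/2$ is not split, contradicting Step~2.
\item $\operatorname{span}\{X_{13},X_{23}\}$ is horizontal at $o$ although $[X_{13},X_{23}]=-X_{12}\neq0$, so the vanishing $A$-tensor in Case~(a) does not follow.
\item $\operatorname{span}\{X_{12},X_{13}\}$ contains the vertical vector (your $E_{12}$), but its horizontal part is not $\operatorname{ad}(X_{12})$-invariant, contradicting the first claim of Case~(b).
\end{itemize}
The reason is visible in the spectrum: $X_{12}$ and $J_VX_{13}=X_{23}$ lie in the same eigenspace (eigenvalue $\tfrac14$) of $R_{X_{13}}$, so Lemma~\ref{l.tojo} cannot separate them. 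These examples are not maximal, but they show that invariance of $\mathfrak s$ under $R$ and $\nabla R$ alone cannot give the splitting.

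What is missing is a genuine use of maximality. You need to prove that every non-split $\mathfrak s$ is contained in a split one (here essentially in $T_oF_{1,2}(\mathbb R^3)$), as Section~\ref{nbp} does in the complex case. For the split ones, you should use splitting along $\Sigma$ and not only at $o$. One way is parallel transport along $\exp(tX)$, which rotates $\operatorname{span}\{X_{12},J_VX\}$ (cf.\ Section~\ref{flags}).

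Three further steps also need more than you give.

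In Case~(a), testing only the \emph{maximal} totally geodesic submanifolds of the Grassmannian against $[\mathfrak s,\mathfrak s]_{\mathfrak v}=0$ is not enough. The projection of a horizontal $\Sigma$ need not be maximal, and a sub-Lie triple system can satisfy the condition while the maximal one containing it does not. For example, the horizontal $\mathbb CP^k$-family fails the condition, yet its totally real sub-systems satisfy it. So you need the classification of all totally geodesic submanifolds of $\widetilde G_2(\mathbb R^{n+2})\cong Q^n$, or a direct argument.

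In Case~(b), even granting $\operatorname{ad}(X_{12})$-invariance, the horizontal part is only a complex subspace of $\mathbb R^n\otimes\mathbb C\cong\mathbb C^n$. Modulo $O(n)$ these come in continuous families (K\"ahler angles, and mixtures of your two types). So ``$W\oplus W$ or the graph of a complex structure'' is exactly what has to be proved, for instance by the singular-vector reduction of Section~\ref{singular} together with a computation like Lemma~\ref{lema.V1/2}.

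Finally, the reduction to $\mathfrak a$ needs $S(O(2)\times O(n))$, not the group with Lie algebra $\mathfrak{so}(n)$. Under the latter, $e_1\wedge u+e_2\wedge w$ is conjugate into $\mathfrak a$ only if $u\perp w$. The reduction is legitimate here only because $\operatorname{Ad}(SO(2))$ fixes $X_{12}$, and therefore preserves the reductive decomposition and the curvature at $o$; you should state this.
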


\subsection{Organization of the paper}

The proof of Theorem~\ref{Teorema} is based on the study of the tangent space
$\mathfrak{s}=T_o\Sigma$ of a totally geodesic submanifold at the base point.
Since $M$ is naturally reductive, Tojo's criterion reduces the problem to
understanding the invariance of $\mathfrak{s}$ under the curvature tensor and
its covariant derivatives. Our strategy consists of choosing suitable singular
vectors with respect to the isotropy representation of the complex Grassmannian in $\mathfrak{s}$, diagonalizing the corresponding Jacobi operator
$R_X$, and exploiting the resulting eigenspace decomposition together with the
$R$- and $\nabla R$-invariance of $\mathfrak{s}$.

Section~\ref{prelim} introduces the geometric background on the partial flag
manifold $F_{1,2}(\mathbb{C}^{n+2})$, its twistor fibration, and the notation
used throughout the paper. In Section~\ref{diag}, we compute the eigenspace
decomposition of the Jacobi operator associated with singular vectors, while
Section~\ref{singular} proves that every totally geodesic submanifold with
nontrivial horizontal component contains a singular vector.

The classification of well-positioned totally geodesic submanifolds is carried
out in Section~\ref{classif}. According to the type of singular vector contained
in $\mathfrak{s}$, the analysis splits into several cases, each leading to one
of the families appearing in Theorem~\ref{Teorema}. Section~\ref{nbp} is devoted
to the non well-positioned case, where we prove that no additional maximal
examples occur for $n\ge2$.

Finally, Section~\ref{further} presents two additional classification results.
The first follows directly from our classification theorem and describes the
maximal connected complete totally geodesic submanifolds of the twistor spaces
$\mathbb{C}P^{2k+1}$. The second combines our methods with the classification
of maximal totally geodesic submanifolds of real Stiefel manifolds obtained
in~\cite{GKR} to classify the maximal connected complete totally geodesic
submanifolds of the real partial flag manifolds
$F_{1,2}(\mathbb{R}^{n+2})$.

The results presented in this paper are part of the author's PhD dissertation.

\section{The geometry of the flag}\label{prelim}

\subsection{The twistor fibration}

The complex flag manifold $M=F_{1,2}(\mathbb{C}^{n+2})$ is the twistor space of the complex Grassmannian $G_2(\mathbb{C}^{n+2})$. The corresponding twistor fibration is given by
\begin{equation}\label{submersão}
    K/H\longrightarrow G/H \stackrel{\pi}{\longrightarrow} G/K,
\end{equation}
where $G=SU(n+2)$, $K=S(U(2)\times U(n))$, and $H=S(U(1)\times U(1)\times U(n))$.

We now describe this fibration in homogeneous terms. The Grassmannian $G_2(\mathbb{C}^{n+2})=G/K$ is a symmetric space with Cartan decomposition $\mathfrak{g}=\mathfrak{k}\oplus\mathfrak{p}$, where $\mathfrak{g}=\mathfrak{su}_{n+2}$, $\mathfrak{k}=\mathfrak{s}(\mathfrak{u}_2\oplus\mathfrak{u}_n)$, and $\mathfrak{p}=\mathbb{C}^n\otimes\mathbb{C}^2$. Accordingly, we identify
\begin{equation*}
\mathfrak{k}=
\left\{
X=
\begin{pmatrix}
P & 0\\
0 & Q
\end{pmatrix}
:
P\in\mathfrak{u}_2,\,
Q\in\mathfrak{u}_n,\,
\operatorname{tr}(X)=0
\right\} \mbox{ and }
\mathfrak{p}=
\left\{
\begin{pmatrix}
0 & Z\\
-Z^* & 0
\end{pmatrix}
:
Z\in M_{2\times n}(\mathbb{C})
\right\}.
\end{equation*}

Let $\mathfrak{g}=\mathfrak{h}\oplus\mathfrak{m}$ be the naturally reductive decomposition of $M$, where $\mathfrak{h}=\mathfrak{s}(\mathfrak{u}_1\oplus\mathfrak{u}_1\oplus\mathfrak{u}_n)$ and $\mathfrak{m}=\mathfrak{z}\oplus\mathfrak{p}$. We refer to $\mathfrak{z}$ and $\mathfrak{p}$ as the vertical and horizontal components of $\mathfrak{m}$, respectively, with respect to the twistor fibration.

We say that a submanifold $\Sigma$ is \textit{well-positioned} with respect to a fibration if
\begin{equation*}
    T_p\Sigma=(T_p\Sigma\cap\mathcal{H}_p)\oplus(T_p\Sigma\cap\mathcal{V}_p)
\end{equation*}
for every $p\in\Sigma$, where $\mathcal{H}$ and $\mathcal{V}$ denote the horizontal and vertical distributions, respectively.

\subsection{Complex structures on \texorpdfstring{$M$}{M}}

Since $G_2(\mathbb{C}^{n+2})$ is a quaternion-Kähler symmetric space, the tangent space $\mathfrak{p}=T_oG_2(\mathbb{C}^{n+2})$ carries a natural quaternionic structure, where $o\in G_2(\mathbb{C}^{n+2})$ is a fixed base point. To describe this structure explicitly, let $E_{ij}$ denote the $(n+2)\times(n+2)$ matrix whose $(i,j)$-entry is $1$ and whose remaining entries are $0$, and define
$X_{ij}=E_{ij}-E_{ji}$ and $Y_{ij}=\sqrt{-1}(E_{ij}+E_{ji})$.

Let $V=X_{12}$ and $W=Y_{12}$ be an orthonormal basis of $\mathfrak{z}$. This basis determines three anti-commuting complex structures on $\mathfrak{p}$, giving rise to a quaternionic structure:
\[
J_VX=[X,V],\qquad
J_WX=[X,W],\qquad
J_{VW}X=J_V(J_WX),
\qquad X\in\mathfrak{p}.
\]
The existence of these three complex structures is independent of the choice of the orthonormal basis $(V,W)$ of $\mathfrak{z}$. Indeed, the same construction can be carried out for any orthonormal pair $(Z,Z^\perp)$ in $\mathfrak{z}$.

In addition to the quaternionic structure above, we consider the complex structure $J_I X=[X,I]$ on $\mathfrak{p}$, where $I=\sqrt{-1}(E_{11}+E_{22})$. This complex structure extends naturally to $\mathfrak{m}$ by
\[
J=J_I|_{\mathfrak{p}}+r|_{\mathfrak{z}},
\]
where $r$ denotes the rotation of angle $\pi/2$ on $\mathfrak{z}$ satisfying $r(V)=-W$. Then $J$ defines an integrable complex structure on $M$ \cite{S}. We also define the almost complex structure
\[
\tilde{J}=J_I|_{\mathfrak{p}}-r|_{\mathfrak{z}},
\]
which is not integrable.

\subsection{Curvature tensor}

Using the twistor fibration together with O'Neill's formulas, we derive the curvature tensor of $M$. More precisely, for a fixed $X\in\mathfrak{p}$, we compute the Jacobi operator $R_X:\mathfrak{m}\to\mathfrak{m}$, $Y\mapsto R(Y,X)X$.

We first recall O'Neill's tensor $A:\mathcal{H}\times\mathcal{H}\to\mathcal{V}$, given by
\[
A_XY=(\nabla_XY)^v=\frac12[X,Y]^v.
\]
A straightforward computation yields
\[
A_XY=\frac12\left(\langle X,J_VY\rangle V+\langle X,J_WY\rangle W\right).
\]
Its adjoint tensor $A^\ast:\mathcal{H}\times\mathcal{V}\to\mathcal{H}$ is given by
\[
A^\ast_X(\lambda V+\mu W)
=-\frac12\left(\lambda J_VX+\mu J_WX\right).
\]

Applying O'Neill's curvature formulas (see \cite[p.~44]{GW}), we obtain, for every $X,Y\in\mathfrak{p}$,
\begin{equation}\label{curv.ad}
\begin{split}
\pi_*R_X(Y)
&=R^B(\pi_*Y,\pi_*X)\pi_*X
+\pi_*\left(2A^\ast_X(A_YX)-A^\ast_X(A_XY)\right)\\
&=-[[Y,X],X]
-\frac34\left(\langle Y,J_VX\rangle J_VX
+\langle Y,J_WX\rangle J_WX\right)\\
&=-\operatorname{ad}_X^2(Y)
-\frac34\left(\langle Y,J_VX\rangle J_VX
+\langle Y,J_WX\rangle J_WX\right).
\end{split}
\end{equation}

We shall also use the following expression for the curvature tensor in terms of the tensor $D=\nabla-\nabla^c$, where $\nabla^c$ denotes the canonical connection (see \cite[p.~11]{OR}). Since the homogeneous decomposition is naturally reductive, we have $U\equiv0$, and hence
\begin{equation}\label{curv.D}
\begin{split}
R(X,Y)Z
&=D_XD_YZ-D_YD_XZ-D_{[X,Y]_{\mathfrak{m}}}Z
-[[X,Y]_{\mathfrak{h}},Z]_{\mathfrak{m}}\\
&=\frac14[X,[Y,Z]_{\mathfrak{m}}]_{\mathfrak{m}}
-\frac14[Y,[X,Z]_{\mathfrak{m}}]_{\mathfrak{m}}
-\frac12[[X,Y]_{\mathfrak{m}},Z]_{\mathfrak{m}}
-[[X,Y]_{\mathfrak{h}},Z]_{\mathfrak{m}},
\end{split}
\end{equation}
for all $X,Y,Z\in\mathfrak{m}$.

Finally, let $X\in\mathfrak{p}$ and $Z\in\mathfrak{z}\subseteq\mathfrak{k}$. Applying \eqref{curv.D} together with the bracket relations of the Cartan decomposition, we obtain
\begin{equation}
\begin{split}
R_X(Z)
&=-\frac14[X,[Z,X]_{\mathfrak{m}}]_{\mathfrak{m}}
-\frac12[[Z,X]_{\mathfrak{m}},X]_{\mathfrak{m}}
-[[Z,X]_{\mathfrak{h}},X]_{\mathfrak{m}}\\
&=-\frac14[X,[Z,X]_{\mathfrak{p}}]_{\mathfrak{z}}
-\frac12[[Z,X]_{\mathfrak{p}},X]_{\mathfrak{z}}
=\frac14Z.
\end{split}
\end{equation}

\section{Reflective submanifolds}\label{reflective}

We say that a submanifold of a Riemannian manifold $M$ is reflective if it is a connected component of the fixed point set of an involutive isometry of $M$. In particular, every reflective submanifold is totally geodesic.

Reflective submanifolds play a central role in this work, since every maximal example appearing in our classification is of this type. We now describe the involutive isometries whose fixed point sets give rise to the reflective submanifolds occurring in Theorem~\ref{Teorema}.

As will be shown in Section~\ref{classif}, the complex structures preserved by a totally geodesic submanifold impose strong restrictions on its tangent space. For this reason, we record below the reflective submanifolds together with the complex structures they preserve.

\begin{itemize}

\item Let
\[
g=
\begin{pmatrix}
I_p & 0\\
0 & -I_q
\end{pmatrix},
\]
where $p+q=n+2$ and $p>2$. The fixed point set is
\[
M^g=\frac{SU(p)}{S(U(1)\times U(1)\times U(p-2))}
=F_{1,2}(\mathbb{C}^{p}).
\]
It is preserved by all four complex structures on $\mathfrak{p}$ introduced above. Moreover, it is $J$-holomorphic.

\item Let $g$ be the complex conjugation on $G$. The fixed point set is
\[
M^g=\frac{SO(n+2)}{S(O(1)\times O(1)\times O(n))}
=F_{1,2}(\mathbb{R}^{n+2}).
\]
It is preserved only by the complex structure $J_V$. Moreover, it is Lagrangian.

\item Let
\[
g=
\begin{pmatrix}
1 & 0 & 0 & 0\\
0 & -1 & 0 & 0\\
0 & 0 & I_p & 0\\
0 & 0 & 0 & -I_q
\end{pmatrix},
\]
where $p+q=n$. The fixed point set is
\[
M^g=
\frac{SU(p+1)\times SU(q+1)}
{S(U(1)\times U(p))\times S(U(1)\times U(q))}
=\mathbb{C}P^p\times\mathbb{C}P^q.
\]
It is preserved only by the complex structure $J_I$. Consequently, it is $J$-holomorphic.

\item Let $g:G\to G$ be given by
\[
g(X)=Y\overline{X}Y^{-1},
\]
where
\[
Y=
\begin{pmatrix}
J_0 & 0 & 0 & 0\\
0 & J_1 & 0 & 0\\
0 & 0 & \ddots & 0\\
0 & 0 & 0 & J_k
\end{pmatrix},
\]
with $k=\lfloor n/2\rfloor$ and
\[
J_i=
\begin{pmatrix}
0 & -1\\
1 & 0
\end{pmatrix}.
\]

The fixed point set of the induced isometry on $M$ is the twistor space of $\mathbb{H}P^k$,
\[
M^g=\frac{Sp(k+1)}{S^1\times Sp(k)}
=\mathbb{C}P^{2k+1}.
\]
It is preserved by the complex structures $J_V$, $J_W$, and $J_{VW}$.

\end{itemize}

\begin{rema}
The fixed point sets of the above involutions project, under the twistor fibration, onto the maximal totally geodesic submanifolds of the Grassmannian $G_2(\mathbb{C}^{n+2})$ presented in \cite{CN2}. The additional cases identified in \cite{K} do not arise from the involutions considered here. Thus, the reflective submanifolds considered here can be regarded as lifts of these totally geodesic submanifolds. Nevertheless, our classification is obtained independently of the classification of totally geodesic submanifolds of the Grassmannian.
\end{rema}

\subsection{Restricted root system and diagonalization}\label{diag}

Let $\mathfrak{a}=\operatorname{span}\{X_{13},X_{24}\}$
be a maximal abelian subspace of $\mathfrak{p}$. For $n>2$, the corresponding restricted root system is of type $BC_2$, and $\mathfrak{p}$ admits the orthogonal decomposition
\begin{equation}\label{dec}
\mathfrak{p}
=
\mathfrak{a}
\oplus
\mathfrak{p}_{\theta_1}
\oplus
\mathfrak{p}_{\theta_2}
\oplus
\mathfrak{p}_{\theta_1+\theta_2}
\oplus
\mathfrak{p}_{\theta_1-\theta_2}
\oplus
\mathfrak{p}_{2\theta_1}
\oplus
\mathfrak{p}_{2\theta_2},
\end{equation}
where $\theta_1$ and $\theta_2$ are the dual basis of $X_{13}$ and $X_{24}$, respectively, and
\begin{equation*}
\mathfrak{p}_{\alpha}
=
\left\{
Y\in\mathfrak{p}
:
-\operatorname{ad}_A^2(Y)
=
\alpha(A)^2Y,
\ \forall\,A\in\mathfrak{a}
\right\}.
\end{equation*}

\begin{rema}
For $n=2$, the restricted root system is of type $C_2$, and the root spaces $\mathfrak{p}_{\theta_1}$ and $\mathfrak{p}_{\theta_2}$ do not appear in the decomposition.
\end{rema}

Since the decomposition \eqref{dec} is orthogonal and
\begin{equation*}
J_V(\mathfrak{a})+J_W(\mathfrak{a})
=
\mathfrak{p}_{\theta_1+\theta_2}
\oplus
\mathfrak{p}_{\theta_1-\theta_2},
\end{equation*}
formula \eqref{curv.ad} immediately shows that, if $X\in\mathfrak{a}$ and $Y\in\mathfrak{p}_{\alpha}$ with $\alpha\in
\{\theta_1,\theta_2,2\theta_1,2\theta_2\},$
then $Y$ is an eigenvector of $R_X$ with eigenvalue $\alpha(X)^2$.

Therefore, it remains only to determine the eigenspaces of $R_X$ on
$\mathfrak{p}_{\theta_1+\theta_2}
\oplus
\mathfrak{p}_{\theta_1-\theta_2}.$
The following table describes the restricted root spaces appearing in the decomposition \eqref{dec}.

\begin{table}[h!] \centering \begin{tabular}{|c|c|c|} \hline \textbf{Space} & \textbf{Description} & \textbf{Dimension} \\ \hline $\mathfrak{a}$ & $span\{X_{13},X_{24}\}$ & $2$ \\ \hline $\mathfrak{p}_{\theta_1}$ & $span\{X_{15},...,X_{1(n+2)},Y_{15},...,Y_{1(n+2)}\}$ & $2n-4$ \\ \hline $\mathfrak{p}_{\theta_2}$ & $span\{X_{25},...,X_{2(n+2)},Y_{25},...,Y_{2(n+2)}\}$ & $2n-4$ \\ \hline $\mathfrak{p}_{\theta_1 +\theta_2}$ & $span\{X_{14}-X_{23},Y_{14}+Y_{23}\}$ & $2$ \\ \hline $\mathfrak{p}_{\theta_1 -\theta_2}$ & $span\{X_{14}+X_{23},Y_{14}-Y_{23}\}$ & $2$ \\ \hline $\mathfrak{p}_{2\theta_1}$ & $span\{Y_{13}\}$ & $1$ \\ \hline $\mathfrak{p}_{2\theta_2}$ & $span\{Y_{24}\}$ & $1$ \\ \hline \end{tabular} \caption{Restricted root decomposition} \label{tab:3} \end{table}

Write $X=\cos(t)X_{13}+\sin(t)X_{24}.$
The eigenvalues of $R_X$ on
$\mathfrak{p}_{\theta_1+\theta_2}\oplus\mathfrak{p}_{\theta_1-\theta_2}$
are
\begin{equation*}
\lambda_1^{\pm}
=
\frac{5\pm\sqrt{9+16\sin^2(2t)}}{8},
\qquad
\lambda_2^{\pm}
=
\frac{5\pm\sqrt{9+112\sin^2(2t)}}{8},
\end{equation*}
whose corresponding eigenvectors lie in
$\operatorname{span}\{X_{14},X_{23}\}$ and
$\operatorname{span}\{Y_{14},Y_{23}\}$, respectively.

We conclude this subsection by describing the eigenspace decomposition of $R_X$ in the two singular cases
\begin{equation*}
X=X_{13}
\qquad\text{and}\qquad
X=\frac{\sqrt2}{2}(X_{13}+X_{24}).
\end{equation*}


\begin{table}[h!] \centering \begin{tabular}{|c|c|c|} \hline \textbf{Eigenvalue} & \textbf{Eigenspace} & \textbf{Dimension} \\ \hline $0$ & $\mathfrak{a}+\mathfrak{p}_{\theta_2}+\mathfrak{p}_{2\theta_2}$ & $2n-1$ \\ \hline $\frac{1}{4}$ & $span\{V,W,J_VX,J_WX\}$ & $4$ \\ \hline $1$ & $\mathfrak{p}_{\theta_1}+span\{X_{14},Y_{14}\}$ & $2n-2$ \\ \hline $4$ & $\mathfrak{p}_{2\theta_1}$ & $1$ \\ \hline \end{tabular} \caption{Eigenspace decomposition of $R_X$ for $X=X_{13}$} \label{tab:1} \end{table}

For Table~\ref{tab:2}, we denote by
$X_-=X_{14}+X_{23},
Y_-=Y_{14}-Y_{23},$
the eigenvectors associated with the eigenvalues
$\lambda_1^{-}$ and $\lambda_2^{-}$, respectively.


\begin{table}[h!] \centering \begin{tabular}{|c|c|c|} \hline \textbf{Eigenvalue} & \textbf{Eigenspace} & \textbf{Dimension} \\ \hline $0$ & $\mathfrak{a}+span\{X_-\}$ & $3$ \\ \hline $\frac{1}{4}$ & $span\{V,W\}$ & $2$ \\ \hline $\frac{1}{2}$ & $\mathfrak{p}_{\theta_1}+\mathfrak{p}_{\theta_2}$ & $4n-8$ \\ \hline $2$ & $\mathfrak{p}_{2\theta_1}+\mathfrak{p}_{2\theta_2}+span\{J_WX\}$ & $3$ \\ \hline $\frac{5}{4}$ & $span\{J_VX\}$ & $1$ \\ \hline $-\frac{3}{4}$ & $span\{Y_-\}$ & $1$ \\ \hline \end{tabular} \caption{Eigenspace decomposition of $R_X$ for
$X=\frac{\sqrt{2}}{2}(X_{13}+X_{24})$} \label{tab:2} \end{table}

\subsection{Singular vectors and the isotropy representation}\label{singular}

In the previous subsection, we determined the eigenspace decomposition of $R_X$ only for the singular vectors corresponding to $t=0$ and $t=\pi/4$. In this subsection, we show that these are the only cases that need to be considered.

The isotropy representation $\varphi:K\longrightarrow GL(\mathfrak{p})$ identifies the orbit spaces $\mathfrak{p}/K=\mathfrak{a}/\mathcal{W}$, where $\mathcal{W}$ is the Weyl group and the maximal abelian subspace $\mathfrak{a}\subseteq\mathfrak{p}$ is a section of the representation $\varphi$.

The singular orbits are represented by the vectors $X=\cos(t)X_{13}+\sin(t)X_{24}$, with $t=0$ and $t=\pi/4$. Our next goal is to prove that, under suitable assumptions, the tangent space of a totally geodesic submanifold always contains a nonzero singular vector.

Let $\mathfrak{s}=T_o\Sigma$, where $\Sigma$ is a totally geodesic submanifold of $M$ with $\dim(\Sigma)\geq2$ and $\mathfrak{s}\cap\mathfrak{p}\neq{0}$. Assume that $X\in\mathfrak{s}\cap\mathfrak{a}$ is nonzero and that $\mathfrak{s}\cap\mathfrak{p}$ contains no nonzero singular vectors.

Lemma~\ref{l.tojo} is stated as in \cite{GKR} and follows directly from Tojo's criterion (\ref{Tojo}). The following two lemmas are adaptations of analogous results in \cite{GKR} to the present setting. For completeness, we include the proof of Lemma~\ref{autovetor}.

\begin{lemm}\label{l.tojo}(\cite{GKR})
Let $\mathfrak{m}=\bigoplus V_{\lambda}$ be the eigenspace decomposition of $(R_X)_o$. If $Y\in V_{\lambda}\cap\mathfrak{s}$ and $Z\in\mathfrak{s}$, then $\pi_{\mu}(-\nabla_YZ)\in\mathfrak{s}$ for every $\mu\neq\lambda$, where $\pi_{\mu}$ denotes the orthogonal projection onto the eigenspace $V_{\mu}$.
\end{lemm}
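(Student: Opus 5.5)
Throughout, $X$ is the fixed element of $\mathfrak{s}\cap\mathfrak{a}$ from the standing assumptions. The plan is to differentiate the $R$-invariance of $\mathfrak{s}$ along the geodesic in the direction $Y$, in the spirit of Tojo's criterion (Theorem~\ref{Tojo}). The basic facts are the following.

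\begin{enumerate}
\item Since $\Sigma$ is totally geodesic, its tangent spaces are invariant under the curvature tensor $R$ and under all covariant derivatives $\nabla^kR$.
\item For $Z\in\mathfrak{s}$, the vector $\nabla_YZ$ is understood as the natural reductive derivative $D_YZ=\frac12[Y,Z]_{\mathfrak{m}}$.
\item For $Z\in\mathfrak{s}$ and $Y\in\mathfrak{s}$, the curve $e^{t\nabla Y}(\mathfrak{s})=e^{tD_Y}(\mathfrak{s})$ consists of $R$-invariant subspaces. This is exactly the statement that parallel transport along the geodesic $\gamma(t)=\exp(tY)$ carries $T_o\Sigma$ to $T_{\gamma(t)}\Sigma$, which is $R$-invariant, read at $o$ via the canonical connection.
\end{enumerate}

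\textbf{Step 1 (first-order condition).} Take $Z\in\mathfrak{s}$. Then $\mathfrak{s}_t:=e^{tD_Y}\mathfrak{s}$ contains $Z_t=e^{tD_Y}Z$ and $Y$, since $D_YY=0$. Apply $R$-invariance of $\mathfrak{s}_t$ to the triple $(Y_t,X_t,X_t)$, where $X_t=e^{tD_Y}X$ and $Y_t=e^{tD_Y}Y=Y$. Differentiating at $t=0$ and using invariance of $\mathfrak{s}$ itself gives a derivative relation. A cleaner route is to conjugate: $R$-invariance of $\mathfrak{s}_t$ is equivalent to invariance of $\mathfrak{s}$ under $R^t:=e^{-tD_Y}\circ R\circ(e^{tD_Y}\times e^{tD_Y}\times e^{tD_Y})$. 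The $t$-derivative of the Jacobi operator at $t=0$ is
\[
\frac{d}{dt}\Big|_{0}R^t_X=[R_X,-D_Y]+R(\cdot,D_YX)X+R(\cdot,X)D_YX,
\]
and this operator must preserve $\mathfrak{s}$.

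\textbf{Step 2 (swap roles).} To isolate $D_Y$ acting on a single vector, I would instead fix the eigenvector $Y\in V_\lambda\cap\mathfrak{s}$ and choose the direction of differentiation to be $Z\in\mathfrak{s}$. Then I use the $\nabla R$-invariance of $\mathfrak{s}$, namely $(\nabla_ZR)(Y,X)X\in\mathfrak{s}$. Expanding,
\[
(\nabla_ZR)(Y,X)X=\nabla_Z(R_XY)-R(\nabla_ZY,X)X-R(Y,\nabla_ZX)X-R(Y,X)\nabla_ZX.
\]
The term $\nabla_Z(R_XY)=\lambda\nabla_ZY$ has no clean meaning pointwise, so I expect to use the canonical-connection version: $\nabla^cR=0$ and $D_Z$ is a derivation-type operator. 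This gives
\[
(\nabla_ZR)(Y,X)X=-\big(D_Z\cdot R\big)(Y,X,X)=D_Z(R_XY)-R(D_ZY,X)X-R(Y,D_ZX)X-R(Y,X)D_ZX .
\]
The combination $R(Y,D_ZX)X+R(Y,X)D_ZX$ is symmetric in the slot $X$. I would handle it by polarization, replacing $X$ by $X+sY'$, or more simply by observing that it lies in $\mathfrak{s}$. The justification is that $D_ZX\in\mathfrak{s}+$ something is not known a priori, so this needs care.

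\textbf{Step 3 (projection).} Since $D_Y$ is skew and $D_YZ=-D_ZY$, we get $D_Z(R_XY)-R_X(D_ZY)=(\lambda-R_X)(D_ZY)=(R_X-\lambda)(D_YZ)$. Applying $\pi_\mu$ gives $(\mu-\lambda)\pi_\mu(D_YZ)$. So whenever the remaining terms lie in $\mathfrak{s}$, we can use that $\mathfrak{s}$ is $R_X$-invariant, hence a sum of its intersections with the $V_\mu$, and hence invariant under each $\pi_\mu$. Dividing by $\mu-\lambda\neq0$ then yields $\pi_\mu(-\nabla_YZ)\in\mathfrak{s}$.

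\textbf{Main obstacle.} The main obstacle is controlling the terms involving $D_ZX$. I would avoid them by using Tojo's criterion with $X$-direction and $Z$-direction fixed. Concretely, I differentiate $R$-invariance of $e^{tD_Y}\mathfrak{s}$ directly: the curve $e^{tD_Y}(Y')$ for $Y'\in\mathfrak{s}$ stays in the invariant subspace. I apply $R_{X}$ only after conjugating back, and use that the second-order terms cancel because $R$ is $\nabla^c$-parallel. This is exactly the argument in \cite{GKR}, which I would follow verbatim.
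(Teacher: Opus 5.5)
There is a genuine gap, exactly where you flag it. Your framework is the right one: use the $R$- and $\nabla R$-invariance of $\mathfrak s$, write $\nabla_ZR=D_Z\cdot R$ using $\nabla^cR=0$, and extract $\pi_\mu$ through the factor $\mu-\lambda$. Step~3 is fine. But you never show that $R(Y,D_ZX)X+R(Y,X)D_ZX\in\mathfrak s$, and your last paragraph does not remove these terms. The identity $\nabla^cR=0$ only lets you write $\nabla_ZR$ as the derivation $D_Z\cdot R$; these terms are part of that derivation, and nothing makes them cancel.

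In fact no argument can dispose of them, because for arbitrary $Z\in\mathfrak s$ the conclusion is false, already inside $M$. Take $X=X_{13}$ and $\mathfrak s=\operatorname{span}\{X_{13},X_{23}\}$. This is the tangent space of the totally geodesic $\mathbb RP^2\subset F(\mathbb R^3)\subset M$ from case~(5) of Section~\ref{flags}; the normal metric on $F(\mathbb R^3)$ is bi-invariant, so every $2$-plane in $\mathfrak{so}_3$ is tangent to a totally geodesic surface. Let $Y=X\in V_0\cap\mathfrak s$, $Z=X_{23}=J_VX\in V_{1/4}\cap\mathfrak s$ and $\mu=\tfrac14$ (Table~\ref{tab:1}). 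Then $\pi_{1/4}(-\nabla_YZ)=\tfrac12[X_{13},X_{23}]_{\mathfrak m}=-\tfrac12V\notin\mathfrak s$. Correspondingly, your leftover term $R(X,D_ZX)X=-R_X(D_ZX)$ is a nonzero multiple of $V$.

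What Tojo's criterion does give is the case $Z=X$, where $X$ is the same vector that defines the Jacobi operator. This is also all that Lemma~\ref{autovetor} uses, since it only involves $\nabla_{X_1^+}X$ and $\nabla_VX$. The missing idea is to differentiate in the direction $X$ rather than $Y$ or $Z$. Then $D_XX=\tfrac12[X,X]_{\mathfrak m}=0$ kills every term in which $X$ is differentiated, and your Step~2 becomes $(\nabla_XR)(Y,X)X=D_X(R_XY)-R_X(D_XY)=(\lambda-R_X)D_XY\in\mathfrak s$. Equivalently, since $e^{tD_X}X=X$, the $R$-invariance of $e^{tD_X}\mathfrak s$ implies that $e^{-tD_X}R_Xe^{tD_X}$ preserves $\mathfrak s$; differentiating at $t=0$ gives $[R_X,D_X]\mathfrak s\subseteq\mathfrak s$.

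Because $\mathfrak s$ is $R_X$-invariant, it is invariant under each $\pi_\mu$, which is a polynomial in $R_X$. Hence $(\lambda-\mu)\pi_\mu(D_XY)\in\mathfrak s$. In the paper's convention $\nabla_AB=-\tfrac12[A,B]_{\mathfrak m}$ one has $D_XY=\tfrac12[X,Y]_{\mathfrak m}=\nabla_YX$, so $\pi_\mu(-\nabla_YX)\in\mathfrak s$ for $\mu\neq\lambda$. The statement should therefore be read with $Z=X$. Note that the later application in Section~\ref{cp.cp}, with $Y=A\in V_0$ and $Z=B\in V_1$ both different from $X$, is not covered by this version and needs a separate justification.
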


\begin{lemm}\label{inters}
If $\mathfrak{s}\cap\mathfrak{p}$ contains no nonzero singular vectors, then
$\mathfrak{s}\cap\mathfrak{a}=\mathbb{R}X,$
$\mathfrak{s}\cap\mathfrak{p}_{\theta_1}={0},$ $\mathfrak{s}\cap\mathfrak{p}_{\theta_2}={0},$ $\mathfrak{s}\cap\mathfrak{p}_{2\theta_1}={0},$ and 
$\mathfrak{s}\cap\mathfrak{p}_{2\theta_2}={0}.$
\end{lemm}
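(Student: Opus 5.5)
The first part of Lemma~\ref{inters} is elementary and needs no curvature at all. Suppose $\mathfrak{s}\cap\mathfrak{a}$ had dimension $2$. Then $\mathfrak{a}\subseteq\mathfrak{s}$, and in particular $X_{13}\in\mathfrak{s}\cap\mathfrak{p}$, which is a nonzero singular vector ($t=0$). This contradicts the hypothesis, so $\mathfrak{s}\cap\mathfrak{a}=\mathbb{R}X$.

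Since $X$ is regular, we may write $X=\cos(t)X_{13}+\sin(t)X_{24}$ with $t$ not a multiple of $\pi/4$. In particular $\cos t\neq0$, $\sin t\neq0$ and $\cos^2t\neq\sin^2 t$.

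For the root spaces, the natural tool is the isotropy action of $K$. Here the key fact is that a root vector $Y\in\mathfrak{p}_\alpha$ is itself a singular vector of $\mathfrak{p}$. Its $K$-orbit meets $\mathfrak{a}$ in a singular line:
\begin{itemize}
\item for $\alpha=2\theta_1$, $Y_{13}$ is $K$-conjugate, via the diagonal element $\mathrm{diag}(e^{i\phi},\dots)$ of $U(1)\subset U(2)$, to a multiple of $X_{13}$;
\item for $\alpha=\theta_1$, $X_{15}$ or $Y_{15}$ is conjugate by an element of $U(n)$ (which acts transitively on unit vectors of each row) to a multiple of $X_{13}$;
\item the cases $\theta_2$ and $2\theta_2$ are analogous, landing on $X_{24}$, i.e.\ again $t=0$ up to the Weyl group.
\end{itemize}
More precisely, I would check that any nonzero $Y$ in $\mathfrak{p}_{\theta_i}$ or $\mathfrak{p}_{2\theta_i}$ corresponds to a matrix $Z\in M_{2\times n}(\mathbb{C})$ of rank one. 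Rank-one matrices are exactly the $K$-orbits through multiples of $X_{13}$, since the singular values of $Z$ are the $\mathfrak{a}$-coordinates. Hence, if $\mathfrak{s}\cap\mathfrak{p}_\alpha\ni Y\neq0$, then $Y$ is a nonzero singular vector in $\mathfrak{s}\cap\mathfrak{p}$, a contradiction.

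Writing $Z$ explicitly for each root space makes this immediate. For $\mathfrak{p}_{\theta_1}$ and $\mathfrak{p}_{2\theta_1}$, only the first row of $Z$ is nonzero. For $\mathfrak{p}_{\theta_2}$ and $\mathfrak{p}_{2\theta_2}$, only the second row is nonzero. In every case $Z$ has rank one, so both of its singular values cannot be nonzero and distinct, and it cannot have two equal nonzero singular values either. By the definition of singular orbits given above, namely $t=0$ or $t=\pi/4$ up to $\mathcal{W}$, this says exactly that $Y$ is singular.

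The only point needing care is confirming that "singular" in the paper means membership in the $K$-orbits of $t\in\{0,\pi/4\}$ modulo $\mathcal{W}$, so that rank-one vectors (case $t=0$) qualify. If instead the intended argument uses curvature, the fallback would be as follows. Use Lemma~\ref{l.tojo} with the eigenspace decomposition of $R_X$ for regular $X$. In that decomposition $\mathfrak{p}_{\theta_i}$ and $\mathfrak{p}_{2\theta_i}$ are distinct eigenspaces, with eigenvalues $\cos^2t$, $\sin^2t$, $4\cos^2t$, $4\sin^2t$, which are pairwise different. The idea would be to project $\nabla_Y X=\tfrac12[Y,X]_{\mathfrak{m}}$ to generate elements of $\mathfrak{a}$ independent from $X$, contradicting the first part. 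I expect the rank-one argument to suffice, and it avoids this computation.
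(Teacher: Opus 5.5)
Your proof is correct, and it takes a different route from the paper. The paper in fact writes out no proof of Lemma~\ref{inters}: it only announces it as an adaptation of \cite{GKR}. The one proof it does include from that group, for Lemma~\ref{autovetor}, runs through the Jacobi operator $R_X$ and the projection principle of Lemma~\ref{l.tojo}.

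You avoid curvature altogether and argue purely from the orbit structure of $K$ on $\mathfrak{p}$:
\begin{enumerate}
\item Identify $\mathfrak{p}$ with $M_{2\times n}(\mathbb{C})$, on which $\mathrm{diag}(A,B)\in K$ acts by $Z\mapsto AZB^{-1}$.
\item The $K$-orbits are classified by singular values, and these are exactly the $\mathfrak{a}$-coordinates of $\cos(t)X_{13}+\sin(t)X_{24}$ up to $\mathcal{W}$.
\item Every nonzero element of $\mathfrak{p}_{\theta_i}$ or $\mathfrak{p}_{2\theta_i}$ has a single nonzero row. So it has rank one and lies in the orbit of a multiple of $X_{13}$, which the hypothesis forbids.
\end{enumerate}
The constraint $\det A\det B=1$ is harmless. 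Left multiplication of $A$ by $\mathrm{diag}(1,e^{i\phi})$ fixes the rank-one normal form and adjusts $\det A$; this is worth one sentence in a written proof.

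What your approach buys is that the lemma becomes essentially built into the definition of a singular vector. The argument works verbatim for $n=2$ and is shorter and more transparent than any curvature computation. Its limitation is that it only controls vectors supported in a single row. For example, $aX_{15}+bY_{24}\in\mathfrak{p}_{\theta_1}\oplus\mathfrak{p}_{2\theta_2}$ has singular values $|a|,|b|$ and is generically regular. This matters only if the lemma is later combined with an eigenspace decomposition of $R_X$ in which root spaces merge; it does not affect the lemma itself.

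One correction to your fallback paragraph. The eigenvalues $\cos^2t$, $\sin^2t$, $4\cos^2t$, $4\sin^2t$ are not pairwise distinct for every regular $t$: for $\tan t=\pm\tfrac12$ one has $\cos^2t=4\sin^2t$. So $\mathfrak{p}_{\theta_1}$ and $\mathfrak{p}_{2\theta_2}$ need not be separate eigenspaces of $R_X$, and that route would need extra care. Your main argument does not depend on it.
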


\begin{lemm}\label{autovetor}
Let $X_1^{\pm}$ and $X_2^{\pm}$ be unit eigenvectors associated with the eigenvalues $\lambda_1^{\pm}$ and $\lambda_2^{\pm}$, respectively. Then the following are equivalent:
\begin{enumerate}
\item[(i)] $X_1^+\in\mathfrak{s}$ or $X_1^-\in\mathfrak{s}$;
\item[(ii)] $V\in\mathfrak{s}$;
\item[(iii)] $X_1^+\in\mathfrak{s}$ and $X_1^-\in\mathfrak{s}$.
\end{enumerate}
The same statement holds with $(X_1^{\pm},V)$ replaced by $(X_2^{\pm},W)$.
\end{lemm}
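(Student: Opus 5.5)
The plan is to take the regular vector $X=\cos(t)X_{13}+\sin(t)X_{24}\in\mathfrak{s}\cap\mathfrak{a}$ fixed in this subsection, and to apply Lemma~\ref{l.tojo} repeatedly to the eigenspace decomposition of $R_X$. Two facts about $M$ will be used throughout.

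\begin{itemize}
\item Because the decomposition is naturally reductive, $\nabla_YZ=\frac12[Y,Z]_{\mathfrak{m}}$ for $Y,Z\in\mathfrak{m}$ at $o$.
\item Because $[\mathfrak{p},\mathfrak{p}]\subseteq\mathfrak{k}$ and $[\mathfrak{z},\mathfrak{p}]\subseteq\mathfrak{p}$, we have $[Y,X]_{\mathfrak{m}}=[Y,X]_{\mathfrak{z}}$ for $Y\in\mathfrak{p}$. Moreover $[\mathfrak{z},X]=\pm J_{(\cdot)}X$.
\end{itemize}

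Directly from the definitions, $J_VX=[X,X_{12}]$ lies in $\operatorname{span}\{X_{14},X_{23}\}$, which is the sum of the $\lambda_1^{\pm}$-eigenspaces. Likewise $J_WX$ lies in $\operatorname{span}\{Y_{14},Y_{23}\}$, the sum of the $\lambda_2^{\pm}$-eigenspaces. The implication (iii)$\Rightarrow$(i) is trivial.

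\emph{(i)$\Rightarrow$(ii).} Suppose $X_1^{+}\in\mathfrak{s}$; the case of $X_1^-$ is identical. Apply Lemma~\ref{l.tojo} with $Y=X_1^+$, $\lambda=\lambda_1^+$ and $Z=X$. By the facts above,
\[
-\nabla_{X_1^+}X=-\tfrac12[X_1^+,X]_{\mathfrak{z}}
=c\bigl(\langle X_1^+,J_VX\rangle V+\langle X_1^+,J_WX\rangle W\bigr),
\]
with $c$ a nonzero constant, by the formula for $A$. The $W$-coefficient vanishes, since $X_1^+\in\operatorname{span}\{X_{14},X_{23}\}$ is orthogonal to $J_WX$. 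Hence $-\nabla_{X_1^+}X$ is a multiple of $V$, and so lies in the $\frac14$-eigenspace.

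We must check that $\frac14\neq\lambda_1^{+}$. Clearly $\lambda_1^+>\frac14$. Also $\lambda_1^-=\frac14$ forces $\sin(2t)=0$, which is a singular value of $t$ and is excluded. Lemma~\ref{l.tojo} then gives $\langle X_1^+,J_VX\rangle V\in\mathfrak{s}$. So $V\in\mathfrak{s}$, provided $\langle X_1^{\pm},J_VX\rangle\neq0$.

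\emph{(ii)$\Rightarrow$(iii).} Suppose $V\in\mathfrak{s}$, which lies in the eigenspace for $\frac14$. Apply Lemma~\ref{l.tojo} with $Y=V$ and $Z=X$. Here $-\nabla_VX=-\frac12[V,X]_{\mathfrak{m}}$ is a nonzero multiple of $J_VX$. Its orthogonal projections onto the $\lambda_1^{+}$- and $\lambda_1^{-}$-eigenlines are $\langle J_VX,X_1^{\pm}\rangle X_1^{\pm}$, and both eigenvalues differ from $\frac14$. Hence both projections lie in $\mathfrak{s}$, and $X_1^\pm\in\mathfrak{s}$, again provided $\langle X_1^{\pm},J_VX\rangle\neq0$.

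The statement for $(X_2^\pm,W)$ follows by the same argument, with $J_W$ and $\operatorname{span}\{Y_{14},Y_{23}\}$ in place of $J_V$ and $\operatorname{span}\{X_{14},X_{23}\}$. In that case one also checks that $\lambda_2^\pm\neq\frac14$ for nonsingular $t$.

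The main obstacle is the nondegeneracy condition $\langle X_1^{\pm},J_VX\rangle\neq0$ (and its analogue for $J_WX$). Equivalently, $J_VX$ must not be an eigenvector of $R_X$ restricted to $\operatorname{span}\{X_{14},X_{23}\}$. I will verify this by writing this restriction, via \eqref{curv.ad}, as an explicit symmetric $2\times2$ matrix in the basis $\{X_{14},X_{23}\}$. Note that $J_VX=\pm(\sin t\,X_{14}\pm\cos t\,X_{23})$. The condition that $J_VX$ be an eigenvector reduces to a trigonometric equation in $t$. I expect its only solutions to be $\sin(2t)=0$ or $\cos(2t)=0$, which are exactly the singular vectors. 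Indeed, Table~\ref{tab:2} shows that $J_VX$ is an eigenvector (with eigenvalue $\frac54$) precisely at $t=\pi/4$. Since $X$ is nonsingular by assumption, the condition holds.
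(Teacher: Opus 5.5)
Your proposal is correct and follows the paper's proof. Both arguments apply Lemma~\ref{l.tojo} to the pairs $(X_1^\pm,X)$ and $(V,X)$, and both rest on the fact that $J_VX$ (resp.\ $J_WX$) is not an eigenvector of $R_X$ for $t\in(0,\pi/4)$, so that both coefficients in $J_VX=aX_1^++bX_1^-$ are nonzero.

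You are more careful than the paper, which only asserts the non-eigenvector property and never checks $\lambda_1^\pm\neq\frac14$. Your planned $2\times2$ computation does go through: on $\operatorname{span}\{X_{14},X_{23}\}$ one has $R_X=-\operatorname{ad}_X^2-\frac34\langle\cdot,u\rangle u$ with $u=J_VX$. Hence $u$ is an eigenvector of $R_X$ if and only if it is an eigenvector of $-\operatorname{ad}_X^2$, that is, if and only if $\sin 2t\cos 2t=0$.
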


\begin{proof}
Let $X=\cos(t)X_{13}+\sin(t)X_{24}\in\mathfrak{s}$, where $t\in(0,\pi/4)$.

\begin{itemize}
\item $(i)\Rightarrow(ii)$: Assume that $X_1^+\in\mathfrak{s}$; the proof for $X_1^-$ is analogous. Since $X_1^+\in V_{\lambda_1^+}$, Lemma~\ref{l.tojo} implies that
\[
\pi_{1/4}(-\nabla_{X_1^+}X)\in\mathfrak{s}.
\]
We claim that $\pi_{1/4}(-\nabla_{X_1^+}X)$ is a nonzero multiple of $V$.

Indeed, since $t\in(0,\pi/4)$, neither $J_VX$ nor $J_WX$ is an eigenvector of $R_X$. Hence,
\[
J_VX=aX_1^++bX_1^-,
\qquad
J_WX=cX_2^++dX_2^-,
\]
where $a,b,c,d$ are all nonzero. Therefore, for every $Y\in\mathfrak{p}$,
\[
\begin{aligned}
\langle-\nabla_{X_1^+}X,V\rangle
&=\frac12\langle J_VX,X_1^+\rangle
=\frac12a,\\
\langle-\nabla_{X_1^+}X,W\rangle
&=\frac12\langle J_WX,X_1^+\rangle
=0,\\
\langle-\nabla_{X_1^+}X,Y\rangle
&=-\frac12\langle[Y,X]_{\mathfrak{z}},X_1^+\rangle
=0.
\end{aligned}
\]
Hence, $V\in\mathfrak{s}$.

\item $(ii)\Rightarrow(iii)$: Assume that $V\in\mathfrak{s}$. Since $-\nabla_VX=\frac12J_VX$, Lemma~\ref{l.tojo} yields
\[
\pi_{\lambda_1^+}(-\nabla_VX)=aX_1^+\in\mathfrak{s},
\qquad
\pi_{\lambda_1^-}(-\nabla_VX)=bX_1^-\in\mathfrak{s}.
\]

\item $(iii)\Rightarrow(i)$: This implication is immediate.
\end{itemize}
\end{proof}

Since every nontrivial linear combination of $X_1^+$ and $X_1^-$ is
singular, Lemma~\ref{autovetor} implies that
$X_1^+,X_1^-,V\notin\mathfrak{s}$. The same argument shows that
$X_2^+,X_2^-,W\notin\mathfrak{s}$. Combining these observations with the
Lemma~\ref{inters} and the eigenspace decomposition of $R_X$, we obtain
$\mathfrak{s}=\mathbb{R}X,$
contradicting the assumption that $\dim(\mathfrak{s})\geq2$. Therefore,
$\mathfrak{s}$ contains a nonzero singular vector, as claimed.

\section{Classification}\label{classif}

By the results of the previous section, every totally geodesic submanifold satisfying
$\mathfrak{s}\cap\mathfrak{p}\neq\{0\}$ contains a nonzero singular vector.
Consequently, the classification reduces to the two singular $K$-orbits represented by
\begin{equation*}
X_{13}
\quad\text{and}\quad
\frac{\sqrt{2}}{2}(X_{13}+X_{24}).
\end{equation*}

The reflective submanifolds described in Section~\ref{reflective} are characterized by the existence of invariant complex structures. The next lemma shows that, under a natural assumption, the existence of a single complex plane inside $\mathfrak{s}$ already forces $\mathfrak{s}\cap\mathfrak{p}$ to be invariant under one of the complex structures $J_V$ or $J_W$.

\begin{lemm}\label{complexo}
Let $Y,J_VY\in\mathfrak{s}\cap\mathfrak{p}$, and assume that
$[Y,J_VY]_{\mathfrak{h}}$
centralizes
$span\{Y,J_VY\}^{\perp}\cap\mathfrak{s}\cap\mathfrak{p}$.
Then $\mathfrak{s}\cap\mathfrak{p}$ is a $J_V$-complex subspace of $\mathfrak{p}$.
The same conclusion holds after replacing $V$ by $W$.
\end{lemm}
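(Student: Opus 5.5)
Let $Y,J_VY\in\mathfrak{s}\cap\mathfrak{p}$ and set $U=[Y,J_VY]$. By the Cartan relations $[\mathfrak{p},\mathfrak{p}]\subseteq\mathfrak{k}=\mathfrak{h}\oplus\mathfrak{z}$, we can write $U=U_{\mathfrak{h}}+U_{\mathfrak{z}}$. The plan is to show that for every $Z\in\operatorname{span}\{Y,J_VY\}^\perp\cap\mathfrak{s}\cap\mathfrak{p}$ the vector $J_VZ$ lies in $\mathfrak{s}$. Since $\mathfrak{s}$ is $R$-invariant by Tojo's criterion (Theorem~\ref{Tojo}, with $e^{\nabla X}$ applied at $X=0$), we have $R(Y,J_VY)Z\in\mathfrak{s}$. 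So the first step is to compute $R(Y,J_VY)Z$ with formula \eqref{curv.D}.

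In \eqref{curv.D} take $X=Y$, $Y=J_VY$, $Z=Z$, all in $\mathfrak{p}$. Then $[Y,Z]_{\mathfrak{m}}=[Y,Z]_{\mathfrak{z}}$, and $[Y,[J_VY,Z]_{\mathfrak{z}}]_{\mathfrak{m}}$ is a combination of $J_VY$ and $J_WY$. Indeed, $[J_VY,Z]_{\mathfrak{z}}=\langle J_VY,J_VZ\rangle V+\langle J_VY,J_WZ\rangle W$ up to normalisation, and the analogous statement holds for the other term. The term $-\tfrac12[[Y,J_VY]_{\mathfrak{z}},Z]$ contributes a multiple of $J_VZ$ and $J_WZ$. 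The key computation is $[Y,J_VY]_{\mathfrak{z}}$: from $A_XY=\tfrac12(\langle X,J_VY\rangle V+\langle X,J_WY\rangle W)$ we get $[Y,J_VY]_{\mathfrak{z}}=\langle Y,J_VJ_VY\rangle V+\langle Y,J_WJ_VY\rangle W=-|Y|^2V$, since $J_WJ_V$ is skew. Hence $-\tfrac12[[Y,J_VY]_{\mathfrak{z}},Z]=\tfrac12|Y|^2[V,Z]=-\tfrac12|Y|^2J_VZ$. The last term $-[U_{\mathfrak{h}},Z]$ vanishes by the centralizing hypothesis.

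Next, the remaining terms $\tfrac14[Y,[J_VY,Z]_{\mathfrak{z}}]-\tfrac14[J_VY,[Y,Z]_{\mathfrak{z}}]$ are combinations of $J_VY$, $J_WY$, $J_VJ_VY=-Y$, and $J_WJ_VY$. The orthogonality $Z\perp Y,J_VY$ kills the $V$-coefficients, because $\langle J_VY,J_VZ\rangle=\langle Y,Z\rangle=0$ and $\langle Y,J_VZ\rangle=-\langle J_VY,Z\rangle=0$. What survives is proportional to $\langle Y,J_WZ\rangle$ times $(J_WY$ and $J_{VW}Y)$-type vectors. I expect this to be the main obstacle. I would handle it by first using $R$-invariance with other triples, for instance $R(Y,Z)Y$ or the Jacobi operator $R_Y$ computed by \eqref{curv.ad}, to extract $\langle Y,J_WZ\rangle J_WY\in\mathfrak{s}$. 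Alternatively, I would replace $Z$ by its component orthogonal to $J_WY$ and $J_{VW}Y$ and treat those two directions separately by the same formula. In either route, the conclusion is that $R(Y,J_VY)Z\equiv -\tfrac12|Y|^2J_VZ$ modulo vectors already known to lie in $\mathfrak{s}$.

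The final step decomposes $\mathfrak{s}\cap\mathfrak{p}=\operatorname{span}\{Y,J_VY\}\oplus(\operatorname{span}\{Y,J_VY\}^\perp\cap\mathfrak{s}\cap\mathfrak{p})$. The first summand is $J_V$-invariant. For the second, the previous paragraph gives $J_VZ\in\mathfrak{s}$, and $J_VZ\in\mathfrak{p}$ since $J_V$ preserves $\mathfrak{p}$. Hence $J_V(\mathfrak{s}\cap\mathfrak{p})\subseteq\mathfrak{s}\cap\mathfrak{p}$, so $\mathfrak{s}\cap\mathfrak{p}$ is $J_V$-complex. The statement for $W$ follows by the same argument, or more directly by applying it to the orthonormal pair $(W,-V)$ of $\mathfrak{z}$; this is legitimate because the quaternionic construction is independent of the chosen orthonormal basis.
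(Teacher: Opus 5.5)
\textbf{Verdict: there is a genuine gap.} You set up the same computation as the paper, and your evaluation of it is right as far as it goes. Put $\mathfrak{s}_0=\operatorname{span}\{Y,J_VY\}^{\perp}\cap\mathfrak{s}\cap\mathfrak{p}$ and $Z\in\mathfrak{s}_0$. In \eqref{curv.D} the third term gives $-\tfrac12|Y|^2J_VZ$ and the fourth vanishes. The first two terms, however, do not vanish; their coefficients are $\langle J_VY,J_WZ\rangle$ and $\langle Y,J_WZ\rangle$ respectively, not a single one.

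Write $Z=Z_1+Z_2$ with $Z_1\in\operatorname{span}\{J_WY,J_VJ_WY\}$ and $Z_2\perp\operatorname{span}\{Y,J_VY,J_WY,J_VJ_WY\}$. The first two terms then add up to $-\tfrac14|Y|^2J_VZ_1$, so
\[
R(Y,J_VY)Z=-\tfrac34|Y|^2J_VZ_1-\tfrac12|Y|^2J_VZ_2 .
\]
For example, $R(X_{13},X_{23})Y_{23}=\tfrac34Y_{13}=-\tfrac34J_VY_{23}$, while $R(X_{13},X_{23})X_{14}=-\tfrac12J_VX_{14}$. So the paper's one-line claim that $R(Y,J_VY)Z$ is a multiple of $J_VZ$ is literally true only when $Z_1=0$ or $Z_2=0$. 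The obstacle you flag is therefore real.

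Your proposal does not remove it. The second route is circular: to treat $Z_2$ and $Z_1$ separately you need to know they lie in $\mathfrak{s}$, and that is exactly what is missing. The first route is not substantiated: nothing in \eqref{curv.ad} visibly isolates a multiple of $J_WY$ from $R_Y(Z)$. Finally, your endpoint ``$R(Y,J_VY)Z\equiv-\tfrac12|Y|^2J_VZ$ modulo vectors already in $\mathfrak{s}$'' presupposes $J_VZ_1\in\mathfrak{s}$, which is essentially the conclusion.

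The missing step is to iterate the operator $L=R(Y,J_VY)$. It maps $\mathfrak{s}_0$ into itself, for three reasons:
\begin{enumerate}
\item $LZ\in\mathfrak{s}$ by $R$-invariance;
\item $LZ\in\mathfrak{p}$ by the formula above;
\item $LZ\perp Y,J_VY$, because $J_VZ_1\in\operatorname{span}\{J_WY,J_VJ_WY\}$ and $J_V$ preserves the orthogonal complement of the quaternionic line $\operatorname{span}\{Y,J_VY,J_WY,J_VJ_WY\}$.
\end{enumerate}
Hence the centralizing hypothesis applies again to $LZ$, and
\[
L^2Z=-\tfrac{9}{16}|Y|^4Z_1-\tfrac14|Y|^4Z_2\in\mathfrak{s}.
\]
Together with $Z=Z_1+Z_2\in\mathfrak{s}$ and $\tfrac9{16}\neq\tfrac14$, this gives $Z_1,Z_2\in\mathfrak{s}_0$. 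Applying $L$ to each piece yields $J_VZ_1,J_VZ_2\in\mathfrak{s}$, hence $J_VZ\in\mathfrak{s}$.

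With this step inserted, the rest of your argument goes through. That includes the decomposition $\mathfrak{s}\cap\mathfrak{p}=\operatorname{span}\{Y,J_VY\}\oplus\mathfrak{s}_0$ and the reduction of the $W$ case to the orthonormal pair $(W,-V)$.
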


\begin{proof}
The proof follows from the computation of
\[
R(Y,J_VY)Z,\qquad
Z\in
\operatorname{span}\{Y,J_VY\}^{\perp}
\cap\mathfrak{s}\cap\mathfrak{p},
\]
which shows that it is a nonzero multiple of $J_VZ$.
Since $\mathfrak{s}$ is $R$-invariant, it follows that
$J_VZ\in\mathfrak{s}\cap\mathfrak{p}$, as claimed.
\end{proof}

Throughout this subsection, we assume that $\mathfrak{s}$ is well-positioned with respect to the twistor fibration, namely,
\begin{equation*}
\mathfrak{s}
=
(\mathfrak{s}\cap\mathfrak z)
\oplus
(\mathfrak{s}\cap\mathfrak p).
\end{equation*}

\subsection{Classification for \texorpdfstring{$X=X_{13}$}{X=X13}}\label{caso1}

\subsubsection{The non-horizontal case}\label{flags}

We now consider the singular vector
$X=X_{13}.$
In this case, $V_{1/4}
=
span\{V,W,J_VX,J_WX\},$
as described in Table~\ref{tab:1}. Assume that $\mathfrak{s}\cap V_{1/4}\neq\{0\}$.

Let $\tau_t$ denote parallel transport along the geodesic
$\gamma(t)=\exp(tX)$
from $0$ to $t$.
Using Tojo's formula~\cite{T},
\begin{equation*}
d(L_{\exp(tX)})^{-1}\tau_t(Y)
=
e^{-t\nabla X^\ast}\cdot Y,
\end{equation*}
we obtain
\begin{equation*}
\begin{split}
\tau_t^{-1}(Y_p^\ast)
=
e^{t\nabla X^\ast}
\cdot
(d(L_g)^{-1}(Y_{go}^\ast))
=
e^{t\nabla X^\ast}
\cdot
(\operatorname{Ad}_{g^{-1}}Y)_o^\ast
=
e^{t\nabla X^\ast}
\cdot
(e^{-t\operatorname{ad}_X}Y)_{\mathfrak m},
\end{split}
\end{equation*}
where $p=g.o$ and $g=\exp(tX)$.

Consequently, parallel transport along $\gamma$ acts on $V_{1/4}$ as two simultaneous rotations, through the same angle, on the planes
$span\{V,J_VX\}$ and $span\{W,J_WX\}$.

Since $\mathfrak{s}$ is well-positioned, after parallel transport if necessary, we may assume that
\begin{equation*}
\mathfrak{s}\cap V_{1/4}\cap\mathfrak p\neq\{0\}.
\end{equation*}
Hence, there exists $\theta\in[0,2\pi]$ such that
\begin{equation*}
J_{U_\theta}X\in\mathfrak{s}, \mbox{ where }
U_\theta=\cos\theta\,V+\sin\theta\,W.
\end{equation*}

By Lemma~\ref{complexo}, $\mathfrak{s}\cap\mathfrak p$ is a $J_{U_\theta}$-complex subspace.

Therefore, the classification reduces to the possible values of
\begin{equation*}
\dim(\mathfrak{s}\cap V_{1/4}\cap\mathfrak p)
\quad\text{and}\quad
\dim(\mathfrak{s}\cap V_{1/4}\cap\mathfrak z),
\end{equation*}
subject to
\begin{equation*}
2
\ge
\dim(\mathfrak{s}\cap V_{1/4}\cap\mathfrak p)
\ge
\dim(\mathfrak{s}\cap V_{1/4}\cap\mathfrak z)
\ge
0.
\end{equation*}

\begin{enumerate}

\item Suppose that
\begin{equation*}
\dim(\mathfrak{s}\cap V_{1/4}\cap\mathfrak p)
=
\dim(\mathfrak{s}\cap V_{1/4}\cap\mathfrak z)
=
2.
\end{equation*}
Then $\mathfrak{s}$ is the tangent space of a lower-dimensional copy of the flag manifold,
\begin{equation*}
F_{1,2}(\mathbb{C}^{n+1}).
\end{equation*}

Indeed, since $\mathfrak{s}\cap\mathfrak p$ is both $J_V$- and $J_W$-complex, we may assume, up to the action of $H$, that
\begin{equation*}
\mathfrak{s}\cap\mathfrak p
=
span\{
X_{13},Y_{13},
X_{23},Y_{23},
\ldots,
X_{1k},Y_{1k},
X_{2k},Y_{2k}
\}.
\end{equation*}

Consequently,
\begin{equation*}
\Sigma
=
SU(n+1)\cdot o
=
\frac{SU(n+1)}
{S(U(1)\times U(1)\times U(n-1))}
=
F_{1,2}(\mathbb{C}^{n+1}).
\end{equation*}

\item Suppose that
\begin{equation*}
\dim(\mathfrak{s}\cap V_{1/4}\cap\mathfrak p)
=
\dim(\mathfrak{s}\cap V_{1/4}\cap\mathfrak z)
=
1.
\end{equation*}

Without loss of generality, assume that
\begin{equation*}
\mathfrak{s}\cap V_{1/4}
=
span\{J_VX,Z\}, \mbox{ where }
Z=\cos(t)V+\sin(t)W.
\end{equation*}

Then
\begin{equation*}
\nabla_ZR(J_VX,X,X)
=
\frac34\sin(t)\,J_VJ_WX.
\end{equation*}

Since $\mathfrak{s}$ is totally geodesic, the left-hand side belongs to $\mathfrak{s}$. Therefore, if $\sin(t)\neq0$, then
\begin{equation*}
J_WX\in\mathfrak{s},
\end{equation*}
which contradicts the assumption
\begin{equation*}
\dim(\mathfrak{s}\cap V_{1/4}\cap\mathfrak p)=1.
\end{equation*}

Hence, $\sin(t)=0,$
and $Z$ is parallel to $V$.

It follows that $\mathfrak{s}$ is the tangent space of the real flag manifold
\begin{equation*}
F_{1,2}(\mathbb{R}^{n+2}).
\end{equation*}

Indeed, since $\mathfrak{s}\cap\mathfrak p$ is $J_V$-complex, we may assume, up to the action of $H$, that
\begin{equation*}
\mathfrak{s}\cap\mathfrak p
=
span\{
X_{13},
X_{23},
\ldots,
X_{1k},
X_{2k}
\}.
\end{equation*}

Therefore,
\begin{equation*}
\Sigma
=
SO(n+2)\cdot o
=
\frac{SO(n+2)}
{S(O(1)\times O(1)\times O(n))}
=
F_{1,2}(\mathbb{R}^{n+2}).
\end{equation*}

\item Suppose that
\begin{equation*}
\dim(\mathfrak{s}\cap V_{1/4}\cap\mathfrak p)=2
\quad\text{and}\quad
\dim(\mathfrak{s}\cap V_{1/4}\cap\mathfrak z)=1.
\end{equation*}

This case cannot occur. Indeed, after applying a suitable element of the isotropy group $H$, we may assume that
$J_VX,\;J_WX,\;V\in\mathfrak{s}.$
Since
\begin{equation*}
R(J_VX,V)J_WX=-\frac{3}{4}W,
\end{equation*}
and $\mathfrak{s}$ is $R$-invariant, it follows that
$W\in\mathfrak{s},$
contradicting the assumption
\begin{equation*}
\dim(\mathfrak{s}\cap V_{1/4}\cap\mathfrak z)=1.
\end{equation*}

\item Suppose that
\begin{equation*}
\dim(\mathfrak{s}\cap V_{1/4}\cap\mathfrak p)=2
\quad\text{and}\quad
\dim(\mathfrak{s}\cap V_{1/4}\cap\mathfrak z)=0.
\end{equation*}

After applying a suitable element of the isotropy group $H$, we may assume that
\begin{equation*}
\mathfrak{s}
=
span\{
X_{13},Y_{13},
X_{23},Y_{23},
\ldots,
X_{1k},Y_{1k},
X_{2k},Y_{2k}
\}.
\end{equation*}

Assume first that $k>3$. Applying the parallel transport $\tau_t$, we obtain
\begin{equation*}
\mathfrak{s}
=
span\{
V,W,
X_{13},Y_{13},
X_{14},Y_{14},
X_{24},Y_{24},
\ldots,
X_{1k},Y_{1k},
X_{2k},Y_{2k}
\}.
\end{equation*}

Since
$X_{24}=J_VX_{14}\in\mathfrak{s},$
while
$X_{23}=J_VX_{13}\notin\mathfrak{s},$
the space $\mathfrak{s}\cap\mathfrak p$ is not $J_V$-invariant. This contradicts Lemma~\ref{complexo}, and therefore $\mathfrak{s}$ cannot be the tangent space of a totally geodesic submanifold.

If $k=3$, then $\mathfrak{s}\subseteq T_oF(\mathbb{C}^3),$
so $\Sigma$ is not maximal. Moreover, Proposition~7.5 of \cite{RN} shows that $F(\mathbb{C}^3)$ admits no totally geodesic submanifolds of codimension two, yielding another contradiction.

\item Finally, suppose that
\begin{equation*}
\dim(\mathfrak{s}\cap V_{1/4}\cap\mathfrak p)=1
\quad\text{and}\quad
\dim(\mathfrak{s}\cap V_{1/4}\cap\mathfrak z)=0.
\end{equation*}

The argument is analogous to the previous case. In particular, when $k=3$,
\begin{equation*}
\mathfrak{s}
=
span\{X_{13},X_{23}\}
\subseteq
T_oF(\mathbb{R}^3),
\end{equation*}
which implies that $\Sigma=\mathbb{R}P^2.$
Hence $\Sigma$ is not maximal (see \cite{RN}).

\end{enumerate}

\subsubsection{The horizontal case}\label{cp.cp}

Assume now that
\begin{equation*}
\mathfrak{s}\cap V_{1/4}=\{0\}.
\end{equation*}
Then $\mathfrak{s}$ is horizontal and we may write
\begin{equation*}
\mathfrak{s}
=
\mathbb{R}X
\oplus
\mathfrak{v}
\oplus
\mathfrak{w}
\oplus
\mathfrak{t}
\subseteq
\mathfrak{p},
\end{equation*}
where
\begin{equation*}
\mathfrak{w}\subseteq
V_1=
span\{X_{14},\ldots,X_{1(n+2)},Y_{14},\ldots,Y_{1(n+2)}\},
\end{equation*}
\begin{equation*}
\mathfrak{v}\subseteq
V_0=
span\{X_{24},\ldots,X_{2(n+2)},Y_{24},\ldots,Y_{2(n+2)}\},
\end{equation*}
and
\begin{equation*}
\mathfrak{t}\subseteq
V_4=
span\{Y_{13}\}.
\end{equation*}

\begin{lemm}\label{JI}
Each of the subspaces $\mathfrak w$ and $\mathfrak v$ is either totally real or
$J_I$-complex.
\end{lemm}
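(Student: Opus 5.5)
The plan is to exploit the fact that, for $X=X_{13}$, the eigenspaces $V_1$ and $V_0$ of $R_X$ are (up to the first column entries) the rows $1$ and $2$ of $M_{2\times n}(\mathbb C)$. On each of them $J_I$ acts as multiplication by $\sqrt{-1}$, and $J_V,J_W$ interchange the two rows. Since $\mathfrak s$ is $R_X$-invariant, it splits along the eigenspace decomposition; this is the decomposition $\mathfrak s=\mathbb RX\oplus\mathfrak v\oplus\mathfrak w\oplus\mathfrak t$ already used. Hence the orthogonal projections of any curvature expression $R(Y_1,Y_2)Y_3$ with $Y_i\in\mathfrak s$ onto $V_0$ and $V_1$ again lie in $\mathfrak v$ and $\mathfrak w$. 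I will treat $\mathfrak w$; the argument for $\mathfrak v$ is identical after exchanging the roles of rows $1$ and $2$.

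First I will compute the horizontal curvature $R(Y_1,Y_2)Y_3$ for $Y_1,Y_2,Y_3\in V_1$ using \eqref{curv.D} (equivalently O'Neill's formula, polarizing \eqref{curv.ad}). The symmetric-space part $-[[Y_1,Y_2],Y_3]$ restricted to a single row is the curvature of a complex projective space. Writing $h$ for the Hermitian product on the row and $\omega(Y,Y')=\langle J_IY,Y'\rangle$, it is a combination of the terms $\operatorname{Re}h(\cdot,\cdot)\,Y_i$ and $\omega(\cdot,\cdot)\,J_IY_i$. The O'Neill correction terms involve $J_VY_i$ and $J_WY_i$. These lie in row $2$, i.e. in $V_0$ (or in $V_{1/4}$, which meets $\mathfrak s$ trivially), so they disappear after projecting to $V_1$. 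Thus $\pi_{1}R(Y_1,Y_2)Y_3$ is given by an explicit $\mathbb CP$-type formula.

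Next comes the dichotomy. If $\omega|_{\mathfrak w}\equiv0$, then $\mathfrak w$ is totally real and we are done. Otherwise, I pick orthonormal $Y,Y'\in\mathfrak w$ with $\omega(Y,Y')\neq0$. Projecting $R(Y,Y')Y$ to $V_1$ gives $aY'+b\,\omega(Y,Y')J_IY$ with $b\neq0$. Hence $J_IY\in\mathfrak w+\operatorname{span}\{Y'\}$. Iterating with $Y'$ replaced by the component of $J_IY$ orthogonal to $Y$, I should obtain $Y,J_IY\in\mathfrak w$.

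Once a $J_I$-complex line $\operatorname{span}\{Y,J_IY\}$ lies in $\mathfrak w$, I will run the argument of Lemma~\ref{complexo} with $J_I$ in place of $J_V$. For $Z\in\mathfrak w$ orthogonal to this line, $\pi_1R(Y,J_IY)Z$ is a nonzero multiple of $J_IZ$, so $J_IZ\in\mathfrak w$ and $\mathfrak w$ is $J_I$-complex.

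The main obstacle is the bookkeeping in the explicit curvature computation. One must check that the coefficient of $J_IZ$ in $\pi_1R(Y,J_IY)Z$ is genuinely nonzero once all contributions are included: the $\mathfrak h$-bracket term $[[Y,J_IY]_{\mathfrak h},Z]$ and the terms from $[\cdot,\cdot]_{\mathfrak m}$ landing in $\mathfrak z$ and back. One must also check that the first-column entries ($X_{14}$, $Y_{14}$, which belong to $V_1$ but interact with $\mathfrak a$) do not spoil the $\mathbb CP$-type form. I would verify these on the basis vectors $X_{1j},Y_{1j}$ directly.
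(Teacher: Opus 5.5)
Your proposal is correct and is essentially the paper's argument. The paper normalizes by the unitary action to $Y=X_{14}$, $Y'=aY_{14}+bX_{15}$ and uses $R(X_{14},Y')X_{14}=-4aY_{14}-bX_{15}$ and $R(X_{14},Y_{14})X_{1k}=-2Y_{1k}$; these are exactly your two $\mathbb{C}P$-type curvature identities written in coordinates.

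Two simplifications are available. Your iteration step is superfluous: since $Y'\in\mathfrak w$ already, $aY'+b\,\omega(Y,Y')J_IY\in\mathfrak w$ gives $J_IY\in\mathfrak w$ at once. The bookkeeping you worry about is also trivial, because brackets of two vectors in the same row have no $\mathfrak z$-component. Hence $R(Y_1,Y_2)Y_3=-[[Y_1,Y_2],Y_3]$ exactly for such vectors, including $X_{14},Y_{14}$.
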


\begin{proof}
We prove the statement for $\mathfrak w$; the argument for $\mathfrak v$ is analogous.

Since $V_1$ is naturally identified with $\mathbb C^n$, the action of $U(n)$ on
$V_1$ is the standard one and is transitive on complex subspaces of fixed
dimension.

Suppose first that $\mathfrak w$ is totally real with respect to $J_I$, namely,
$J_I(\mathfrak w)\perp\mathfrak w.$
Then, after applying an element of $U(n)$, we may assume that
\begin{equation*}
\mathfrak w=
span\{X_{14},\ldots,X_{1(p+2)}\}.
\end{equation*}

Otherwise, after applying the $U(n)$-action, we may suppose that
\begin{equation*}
span\{X_{14},aY_{14}+bX_{15}\}
\subseteq
\mathfrak w,
\qquad
a\neq0.
\end{equation*}

Then,
\begin{equation*}
R(X_{14},aY_{14}+bX_{15})X_{14}
=
-4aY_{14}-bX_{15}
\in
\mathfrak s,
\end{equation*}
which implies that $Y_{14}\in\mathfrak s.$
Moreover, for every $k\ge5$,
\begin{equation*}
R(X_{14},Y_{14})X_{1k}
=
-[[X_{14},Y_{14}],X_{1k}]
=
-2Y_{1k}
\in
\mathfrak s.
\end{equation*}

Therefore,
\begin{equation*}
\mathfrak w=
span\{
X_{14},\ldots,X_{1(p+2)},
Y_{14},\ldots,Y_{1(p+2)}
\},
\end{equation*}
showing that $\mathfrak w$ is $J_I$-complex.
\end{proof}

The next lemma shows that the components $\mathfrak w$ and $\mathfrak v$ are orthogonal through the quaternionic structures.

\begin{lemm}
The subspaces $J_V(\mathfrak w)$ and $J_W(\mathfrak w)$ are orthogonal to $\mathfrak v$. Consequently,
\begin{equation*}
span\{X_{24},\ldots,X_{2(p+2)},Y_{24},\ldots,Y_{2(p+2)}\}
\perp
\mathfrak s.
\end{equation*}
\end{lemm}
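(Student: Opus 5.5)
Take a nonzero $Y\in\mathfrak w\subseteq V_1$ and a nonzero $Z\in\mathfrak v\subseteq V_0$. We show $\langle J_VY,Z\rangle=\langle J_WY,Z\rangle=0$ using the $R$-invariance of $\mathfrak s$ and the horizontality assumption $\mathfrak s\cap V_{1/4}=\{0\}$. Since $\mathfrak s\subseteq\mathfrak p$, we have $\mathfrak s\cap\mathfrak z=\{0\}$. The plan is to produce from $Y,Z\in\mathfrak s$ some element of $\mathfrak s$ whose $\mathfrak z$-component is a nonzero multiple of $\langle J_VY,Z\rangle V+\langle J_WY,Z\rangle W$.

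The natural candidate is the curvature $R(Y,Z)X$ with $X=X_{13}$, or $R(Y,Z)Z'$ for suitable $Z'\in\mathfrak s$. By \eqref{curv.D}, with all arguments in $\mathfrak p$, the terms $[[Y,Z]_{\mathfrak h},\cdot\,]_{\mathfrak m}$ and the $D$-terms have $\mathfrak z$-parts only when a bracket lands in $\mathfrak p$ and is then bracketed back; every bracket of two elements of $\mathfrak p$ lies in $\mathfrak k$. So $R(Y,Z)X$ is horizontal unless one uses a $\mathfrak z$ input. This means the $\mathfrak z$-component is better detected through the covariant derivative. By Tojo's criterion (Theorem~\ref{Tojo}), or directly Lemma~\ref{l.tojo} applied to $R_X$, with $Y\in V_1\cap\mathfrak s$ and $Z\in\mathfrak s$, we get $\pi_{1/4}(-\nabla_YZ)\in\mathfrak s$. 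Now $-\nabla_YZ$ has $\mathfrak z$-part $-A_YZ=-\tfrac12\left(\langle Y,J_VZ\rangle V+\langle Y,J_WZ\rangle W\right)$, and its horizontal part is $\tfrac12[Y,Z]_{\mathfrak p}=0$, because $[\mathfrak p,\mathfrak p]\subseteq\mathfrak k$. Since $V,W\in V_{1/4}$, the projection $\pi_{1/4}$ keeps this vertical vector. Hence $\langle Y,J_VZ\rangle V+\langle Y,J_WZ\rangle W\in\mathfrak s\cap V_{1/4}=\{0\}$, which gives both orthogonality relations, using that $J_V$ and $J_W$ are skew.

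For the ``consequently'' part, use the normal form of Lemma~\ref{JI}: up to $H$, $\mathfrak w$ is either $\operatorname{span}\{X_{14},\dots,X_{1(p+2)}\}$ or its $J_I$-complexification. Compute directly $J_V X_{1j}=\pm X_{2j}$, $J_WX_{1j}=\pm Y_{2j}$, $J_VY_{1j}=\pm Y_{2j}$, $J_WY_{1j}=\pm X_{2j}$ from the brackets with $V=X_{12}$ and $W=Y_{12}$. Thus $J_V(\mathfrak w)+J_W(\mathfrak w)=\operatorname{span}\{X_{2j},Y_{2j}:4\le j\le p+2\}$ in both cases. This space lies in $V_0$, so it is orthogonal to $\mathbb RX\oplus\mathfrak w\oplus\mathfrak t$, and by the first part it is orthogonal to $\mathfrak v$. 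Therefore it is orthogonal to $\mathfrak s$.

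The main obstacle is verifying that Lemma~\ref{l.tojo} applies with the correct eigenvalues. $Y$ must be an eigenvector of $R_X$ with eigenvalue $1\neq 1/4$, which holds since $\mathfrak w\subseteq V_1$. We also need the convention that $\nabla_YZ$ is evaluated for the Killing extensions, so that its $\mathfrak z$-part is exactly $A_YZ$ as given by O'Neill's tensor. A further check is the sign bookkeeping in $J_V X_{1j}$ and $J_W X_{1j}$, which is needed so that the totally real case also produces all of $\operatorname{span}\{X_{2j},Y_{2j}\}$.
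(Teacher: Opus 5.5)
Your argument is correct and is essentially the paper's proof. Both apply Lemma~\ref{l.tojo} to $\nabla$ of a vector of $\mathfrak w$ against a vector of $\mathfrak v$, note that the result is purely vertical and hence lies in $V_{1/4}$, and conclude from $\mathfrak s\cap\mathfrak z=\{0\}$. The differences are cosmetic: you differentiate along the $V_1$-vector and argue directly, handling $J_V$ and $J_W$ at once, instead of arguing by contradiction through $A=J_VB+C$; you also spell out the ``consequently'' clause via the normal form of Lemma~\ref{JI}, which the paper leaves implicit.
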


\begin{proof}
Suppose that
$J_V(\mathfrak w)\not\perp\mathfrak v.$
Choose
\begin{equation*}
A=J_VB+C\in\mathfrak v,
\mbox{ where } 0\neq B\in\mathfrak w \mbox{ and }
C\perp J_V(\mathfrak w).
\end{equation*}
Then
\begin{equation*}
\begin{split}
\nabla_AB
=
-\frac12[A,B]_{\mathfrak m}
=
-\frac12
\left(
[J_VB,B]_{\mathfrak z}
+
[C,B]_{\mathfrak z}
\right)=
-\frac12
\left(
V
+
\langle[C,B]_{\mathfrak z},W\rangle W
\right).
\end{split}
\end{equation*}

Since
\begin{equation*}
A\in V_0,
\qquad
V+\langle[C,B]_{\mathfrak z},W\rangle W
\in V_{1/4},
\end{equation*}
Lemma~\ref{l.tojo} implies that
\begin{equation*}
V+\langle[C,B]_{\mathfrak z},W\rangle W
\in\mathfrak s,
\end{equation*}
contradicting the assumption that $\mathfrak s$ is horizontal.
The proof that
$J_W(\mathfrak w)\perp\mathfrak v$
is analogous.
\end{proof}

By Lemma~\ref{JI}, the subspace $\mathfrak v$ is also either totally real or
$J_I$-complex. Hence
\begin{equation*}
\mathfrak v=
span\{X_{2(p+3)},\ldots,X_{2(p+q+2)}\},
\end{equation*}
or
\begin{equation*}
\mathfrak v=
span\{
X_{2(p+3)},Y_{2(p+3)},
\ldots,
X_{2(p+q+2)},Y_{2(p+q+2)}
\},
\end{equation*}
where $0\le q\le n-p$.

Therefore, maximality forces
$\mathfrak t=
span\{Y_{13}\},$
and
\begin{equation*}
\begin{split}
\mathfrak s
=
span\{
X_{13},
\ldots,
X_{1(p+2)},
X_{2(p+3)},
\ldots,
X_{2(p+q+2)},
Y_{13},
\ldots,
Y_{1(p+2)},
Y_{2(p+3)},
\ldots,
Y_{2(p+q+2)}
\}.
\end{split}
\end{equation*}

Hence,
\begin{equation*}
\Sigma
=
(SU(p+1)\times SU(q+1)).o
=
\frac{SU(p+1)\times SU(q+1)}
{S(U(1)\times U(p))\times S(U(1)\times U(q))}
=
\mathbb CP^p\times\mathbb CP^q.
\end{equation*}
    
\subsection{Classification for \texorpdfstring{$X=\frac{\sqrt{2}}{2}(X_{13}+X_{24})$}{X=sqrt{2}{2}(X13+X24)}}\label{caso3}

From now on, let
\begin{equation*}
X=\frac{\sqrt2}{2}(X_{13}+X_{24}).
\end{equation*}

We may assume that $\mathfrak s$ contains no singular vectors of the same type as
$X_{13}$; otherwise, we are reduced to the cases treated in
Sections~\ref{flags} and~\ref{cp.cp}.

As in the previous subsection, our goal is to describe $\mathfrak s$ in terms of
the eigenspace decomposition of the Jacobi operator $R_X$.

The following three lemmas are adaptations of analogous results in \cite{GKR} to the present setting. We include the proofs for completeness.

\begin{lemm}\label{X-Y-}
Using the notation of Table~\ref{tab:2}, we have
$X_-,Y_-\notin\mathfrak s.$
\end{lemm}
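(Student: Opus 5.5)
The plan is to argue by contradiction and mimic the proof of Lemma~\ref{autovetor}. Working at $X=\frac{\sqrt2}{2}(X_{13}+X_{24})\in\mathfrak s$, we use the eigenspace decomposition of Table~\ref{tab:2} together with Lemma~\ref{l.tojo}. The aim is to show that $X_-\in\mathfrak s$ or $Y_-\in\mathfrak s$ forces a singular vector of the type of $X_{13}$ into $\mathfrak s\cap\mathfrak p$, which we have excluded.

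\textbf{Case $X_-\in\mathfrak s$.} Since $X_-=X_{14}+X_{23}$ lies in $V_0$, which also contains $\mathfrak a$, the whole subspace $\operatorname{span}\{X,X_-\}$ lies in $\mathfrak s\cap V_0$. We first check whether some nonzero combination $aX+bX_-$ is $K$-conjugate to a multiple of $X_{13}$. Both vectors are real skew-symmetric supported on the indices $\{1,2\}\times\{3,4\}$, so this amounts to a computation with $2\times2$ matrices $Z$. The singular values of $Z=\frac{\sqrt2}{2}a\,I+b\begin{pmatrix}0&1\\1&0\end{pmatrix}$ are $|\frac{\sqrt2}{2}a\pm b|$. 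Taking $b=\frac{\sqrt2}{2}a$ makes one of them vanish, so $Z$ has rank one. A rank-one $Z$ corresponds to the $K$-orbit of $X_{13}$, which gives the contradiction. If this rank computation were unavailable, the fallback would be to apply Lemma~\ref{l.tojo} with $Y=X_-$ and $Z=X$. Then $-\nabla_{X_-}X=\frac12[X_-,X]_{\mathfrak z}$, and we would project it onto $V_{1/4}=\mathfrak z$.

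\textbf{Case $Y_-\in\mathfrak s$.} Here $Y_-=Y_{14}-Y_{23}$ is the eigenvector for $-\frac34$. First we apply Lemma~\ref{l.tojo} with $Y=Y_-$ and $Z=X$. The bracket $[Y_-,X]$ has $\mathfrak z$-component $\frac12\langle J_V Y_-,X\rangle V+\frac12\langle J_WY_-,X\rangle W$. Its projection onto $V_{1/4}$ gives a vector of $\mathfrak z$, which is nonzero provided $Y_-$ pairs nontrivially with $J_WX$. The $W$-coefficient should indeed be nonzero, since $J_WX$ involves the $Y_{14},Y_{23}$ directions. This yields $W\in\mathfrak s$ (or a combination with $V$). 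Next we apply Lemma~\ref{l.tojo} with $Y=W$ and $Z=X$. Since $-\nabla_WX=\frac12J_WX$ and $J_WX\in V_2$, this gives $J_WX\in\mathfrak s$.

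With $X,J_WX\in\mathfrak s\cap\mathfrak p$, we look for a singular vector of the form $aX+bJ_WX$ or $aX+bY_-$. We compute which such combination corresponds to a rank-one $2\times2$ complex matrix $Z$, i.e. one with vanishing determinant. Since $X$ corresponds to a multiple of the identity and $Y_-$ to $\sqrt{-1}$ times a reflection-type matrix, the determinant of $aX+bY_-$ is $\frac{a^2}{2}+b^2$ (up to sign conventions) and never vanishes. So we must use $J_WX$ instead, or bring in additional vectors.

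\textbf{Main obstacle.} The hard step is the $Y_-$ case: it is not yet clear that a rank-one combination exists. If no direct rank-one combination appears, the fallback is to use $R$-invariance. We would compute $R(Y_-,X)X$ and $R(J_WX,X)Y_-$ to generate more of $\mathfrak s$, until $\mathfrak s\cap\mathfrak p$ contains a complex line $\operatorname{span}\{Y,J_WY\}$. Lemma~\ref{complexo} then makes $\mathfrak s\cap\mathfrak p$ $J_W$-complex. In particular $J_WX_-$-type vectors would lie in $\mathfrak s$, and among these a rank-one singular vector can be found, giving the contradiction.
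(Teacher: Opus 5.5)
Your $X_-$ case is correct. It is a hands-on version of the paper's argument, which notes that $[X,X_-]=[X,Y_-]=0$. Hence each of $\operatorname{span}\{X,X_-\}$ and $\operatorname{span}\{X,Y_-\}$ is a maximal abelian subspace of $\mathfrak p$, so it is $K$-conjugate to $\mathfrak a$ and contains a conjugate of $X_{13}$. The $Y_-$ case, however, has a genuine gap caused by a sign error. $Y_{14}$ puts $\sqrt{-1}$ at position $(1,2)$ of $Z$, and $-Y_{23}$ puts $-\sqrt{-1}$ at position $(2,1)$. So $Y_-$ corresponds to $Z=\begin{pmatrix}0&\sqrt{-1}\\-\sqrt{-1}&0\end{pmatrix}$, which is Hermitian: it is $\sqrt{-1}$ times a rotation, not a reflection. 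Therefore $aX+bY_-$ corresponds to a Hermitian matrix with eigenvalues $\frac{\sqrt2}{2}a\pm b$ and determinant $\frac{a^2}{2}-b^2$. This vanishes for $b=\pm\frac{\sqrt2}{2}a$. The determinant $\frac{a^2}{2}+b^2$ you found belongs to $Y_{14}+Y_{23}$, i.e.\ to the direction of $J_WX$. So the rank-one vector you say is missing is in fact present, and the $Y_-$ case closes exactly like the $X_-$ case.

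The fallback you propose instead does not work. Your first step claims $\langle Y_-,J_WX\rangle\neq0$, but $J_WX=-\frac{\sqrt2}{2}(Y_{14}+Y_{23})$ is orthogonal to $Y_-$. More fundamentally, $[Y_-,X]=0$, so $-\nabla_{Y_-}X=\frac12[Y_-,X]_{\mathfrak m}=0$. Lemma~\ref{l.tojo} then yields nothing, and in particular not $W\in\mathfrak s$. The same vanishing kills your fallback for $X_-$, though you do not need it there. The remaining steps are not an argument. These are generating more of $\mathfrak s$ by $R$-invariance until a $J_W$-complex line appears, then invoking Lemma~\ref{complexo} and hoping a rank-one vector turns up. As written, half of the lemma is unproved. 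It is repaired by either the corrected determinant computation or the commutation observation $[X,Y_-]=0$.
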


\begin{proof}
Observe that
$[X,X_-]=[X,Y_-]=0.$
Hence,
$span\{X,X_-\}
\mbox{ and }
span\{X,Y_-\}$
are conjugate to $\mathfrak a$ and therefore contain singular vectors of the same
type as $X_{13}$, contradicting our assumption.
\end{proof}

\begin{lemm}\label{vertical.complexo}
We have
$V\in\mathfrak s
\iff
J_VX=X_1^+\in\mathfrak s,$
and
$W\in\mathfrak s
\iff
J_WX=X_2^+\in\mathfrak s.$
\end{lemm}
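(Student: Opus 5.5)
We mimic the proof of Lemma~\ref{autovetor}, now with the singular vector $X=\frac{\sqrt2}{2}(X_{13}+X_{24})\in\mathfrak s$ and the eigenspace decomposition of $R_X$ from Table~\ref{tab:2}. Here $V,W$ span the eigenspace $V_{1/4}$. The vectors $J_VX$ and $J_WX$ lie in the eigenspaces for $\tfrac54$ and $2$, respectively, so they are themselves eigenvectors; the notation $X_1^+=J_VX$ and $X_2^+=J_WX$ reflects this (at $t=\pi/4$ one has $\lambda_1^+=\tfrac54$ and $\lambda_2^+=2$). The two equivalences are symmetric, so we treat $V$ and $J_VX$ and indicate the changes for $W$.

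For the direction $V\in\mathfrak s\Rightarrow J_VX\in\mathfrak s$, we use the formula $-\nabla_VX=\frac12 J_VX$, which comes from $\nabla_VX=-\frac12[V,X]_{\mathfrak m}$ and $J_VX=[X,V]$. Since $V\in V_{1/4}$, $X\in\mathfrak s$, and $J_VX\in V_{5/4}$ with $\tfrac54\neq\tfrac14$, Lemma~\ref{l.tojo} gives
\[
\pi_{5/4}\bigl(-\nabla_VX\bigr)=\tfrac12 J_VX\in\mathfrak s .
\]
For $W$ the same argument works verbatim, using the eigenvalue $2$. We must still check that $J_WX$ genuinely lies in the $2$-eigenspace, as Table~\ref{tab:2} asserts, so that it has a nonzero projection there.

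For the converse, assume $J_VX\in\mathfrak s$, so $J_VX\in V_{5/4}\cap\mathfrak s$. Lemma~\ref{l.tojo} gives $\pi_{1/4}(-\nabla_{J_VX}X)\in\mathfrak s$. As in the proof of Lemma~\ref{autovetor}, we compute
\[
-\nabla_{J_VX}X=\tfrac12[J_VX,X]_{\mathfrak m}=\tfrac12[J_VX,X]_{\mathfrak z}.
\]
Its $V$-component is a nonzero multiple of $\langle J_VX,J_VX\rangle=|X|^2$, and its $W$-component is a multiple of $\langle J_WX,J_VX\rangle=0$. This uses $A_XY=\tfrac12(\langle X,J_VY\rangle V+\langle X,J_WY\rangle W)$ and the anticommutation of $J_V$ and $J_W$, which makes $J_VX\perp J_WX$. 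Hence the projection to $V_{1/4}$ is a nonzero multiple of $V$, and $V\in\mathfrak s$.

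The $W$ case is a little more delicate. The eigenspace $V_2$ also contains $\mathfrak p_{2\theta_1}+\mathfrak p_{2\theta_2}$, so from $J_WX\in\mathfrak s$ we need $J_WX\in V_2\cap\mathfrak s$, which holds trivially since $J_WX$ is itself an eigenvector. Then the same projection computation yields a nonzero multiple of $W$.

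The main point to verify is the bracket computation $[J_UX,X]_{\mathfrak z}$ for $U\in\{V,W\}$ at this particular $X$. We need its $\mathfrak z$-part to be exactly along $U$ and to be nonzero. I expect this to follow from the O'Neill tensor formula together with $J_VX\perp J_WX$, but it should be confirmed with the explicit matrices $X_{13}+X_{24}$, $X_{12}$ and $Y_{12}$. Note that, unlike in Lemma~\ref{autovetor}, no hypothesis excluding singular vectors is needed here, because $J_VX$ and $J_WX$ are already eigenvectors.
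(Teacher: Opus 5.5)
Your proposal is correct and is essentially the paper's proof, which simply reruns the argument of Lemma~\ref{autovetor}. Lemma~\ref{l.tojo} applied to $V\in V_{1/4}$ gives the horizontal vector $-\nabla_VX$, a nonzero multiple of $J_VX$, and applied to the eigenvector $J_VX$ gives $\pi_{1/4}(-\nabla_{J_VX}X)$, a nonzero multiple of $V$; your signs for $\nabla$ clash slightly, which is harmless since only nonzero multiples matter.

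On the point you flagged for checking: $J_WX=-\frac{\sqrt2}{2}(Y_{14}+Y_{23})$ lies in $\mathfrak p_{\theta_1+\theta_2}$, so by \eqref{curv.ad} its eigenvalue is $2-\frac34=\frac54$, not $2$ as Table~\ref{tab:2} states. Your argument only uses that $J_WX$ is an eigenvector of $R_X$ with eigenvalue different from $\frac14$, so it goes through unchanged.
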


\begin{proof}
The proof is similar to that of Lemma~\ref{autovetor}.
\end{proof}

\begin{lemm}\label{lema.V1/2}
If $0\neq Y\in\mathfrak s\cap V_{1/2},$
then, up to the action of $H$, $Y=X_{15}+X_{26}.$
\end{lemm}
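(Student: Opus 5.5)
We are in the setting of Section~\ref{caso3}: $X=\frac{\sqrt2}{2}(X_{13}+X_{24})\in\mathfrak s$, and $\mathfrak s$ contains no singular vector of the type of $X_{13}$. The eigenspace $V_{1/2}=\mathfrak p_{\theta_1}\oplus\mathfrak p_{\theta_2}$ is spanned by the $X_{1j},Y_{1j},X_{2j},Y_{2j}$ with $j\ge5$. We identify it with $\mathbb C^{n-2}\otimes\mathbb C^2$, writing $Y=\sum_{j\ge5}(a_jE_{1j}+b_jE_{2j})$ in skew-Hermitian form with $a,b\in\mathbb C^{n-2}$.

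The subgroup of $H$ that fixes $X$ contains the $U(n-2)$ acting on the indices $5,\dots,n+2$. It also contains the diagonal torus elements $\operatorname{diag}(e^{i\alpha},e^{i\beta},e^{i\alpha},e^{i\beta},\dots)$, suitably normalized to have determinant one. These act on $(a,b)$ by $(a,b)\mapsto (e^{i\alpha}Ua,\,e^{i\beta}Ub)$. Using $U(n-2)$, one can always bring the pair $(a,b)$ into $\operatorname{span}\{e_5,e_6\}$, with $a=|a|e_5$ and $b=c\,e_5+d\,e_6$, where $c\in\mathbb C$ and $d\ge0$. The torus then adjusts the phase of $c$.

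So the task is to show that $\mathfrak s$-membership forces $|a|=|b|$, $c=0$ and $d=|b|$. After rescaling this gives $Y=X_{15}+X_{26}$ (up to a factor).

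The key step is to produce singular vectors of $X_{13}$-type from forbidden configurations. I would first compute $R(Y,X)X$ and $R(X,Y)Y$, together with the Tojo-type projections of Lemma~\ref{l.tojo} applied to $-\nabla_YX=\frac12[X,Y]_{\mathfrak m}$. Here $[X,Y]$ is a bracket of two elements of $\mathfrak p$, so it lies in $\mathfrak k$, and its $\mathfrak m$-part is its $\mathfrak z$-component $\frac12(\langle X,J_VY\rangle V+\langle X,J_WY\rangle W)$. Since $Y\in\mathfrak p_{\theta_1}\oplus\mathfrak p_{\theta_2}$ is orthogonal to $J_VX$ and $J_WX$, this component vanishes. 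Hence the useful information must come from the second-order terms.

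I would compute $R(X,Y)Y$ via \eqref{curv.ad}-type formulas, or directly from \eqref{curv.D}. Its components in $V_0=\mathfrak a+\mathbb R X_-$, in $\mathbb R Y_-$ and in $V_2$ must lie in $\mathfrak s$, by $R$-invariance and Lemma~\ref{l.tojo} (projecting onto eigenspaces of $R_X$ different from $1/2$). The $Y_-$-component is proportional to something like $\operatorname{Im}(\bar a\cdot b)$-type pairings, and Lemma~\ref{X-Y-} forces it to vanish. The $X_-$-component and the $\mathfrak a$-component anti-parallel to $X$ must also vanish, or else they yield vectors in $\mathfrak s\cap\operatorname{span}\{X,X_-\}$, which is conjugate to $\mathfrak a$ and contains $X_{13}$-type singular vectors. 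I expect these conditions to give $|a|=|b|$ and $\langle a,b\rangle=0$ (both real and imaginary parts), which means $c=0$. In the $\mathfrak a$-direction, the component proportional to $|a|^2X_{13}+|b|^2X_{24}$ minus its $X$-part must vanish, and this gives $|a|=|b|$.

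Once $c=0$ and $|a|=|b|=d$, the torus (and a phase in $U(n-2)$ on $e_6$) makes both coefficients real. This yields $Y$ proportional to $X_{15}+X_{26}$.

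The main obstacle is the bookkeeping in computing $R(X,Y)Y$, including the $\frac34$ O'Neill correction terms along $J_VY$ and $J_WY$. I also need to verify that each offending component really produces an $X_{13}$-type singular vector rather than merely some vector in $V_0$. If the direct curvature terms are insufficient, I would additionally use $\nabla R$-invariance (Tojo's criterion), as was done in case (2) of Section~\ref{flags}, on $R(Y,X)Y$ differentiated along $X$.
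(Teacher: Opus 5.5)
Your proposal is correct and is essentially the paper's proof. You normalize $Y$ by the part of $H$ fixing $X$ and use $R(X,Y)Y=R_Y(X)\in\mathfrak s$. Since $[X,Y]_{\mathfrak z}=0$, there are no O'Neill or $V_2$ terms, and this vector equals $\tfrac{1}{\sqrt2}\bigl(|a|^2X_{13}+|b|^2X_{24}+\operatorname{Re}\langle a,b\rangle X_-+\operatorname{Im}\langle a,b\rangle Y_-\bigr)$ with $\langle a,b\rangle=\sum_j a_j\bar b_j$. Lemma~\ref{X-Y-} then forces $\langle a,b\rangle=0$ and $|a|=|b|$. The check you flagged is settled as follows: the part of this vector in $V_0$ orthogonal to $X$ commutes with $X$. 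If it is nonzero, it spans together with $X$ a two-dimensional abelian subspace of $\mathfrak p$, which is maximal abelian because the Grassmannian has rank two. Such a subspace is $K$-conjugate to $\mathfrak a$ and so contains an $X_{13}$-type vector, which is excluded.
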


\begin{proof}
Using the action of $SU(n-2)\subseteq H$, we may assume that
\begin{equation*}
Y=X_{15}+aX_{25}+bY_{25}+cX_{26},
\qquad c\geq0.
\end{equation*}
Then
$\sqrt{2}\,R_Y(X)
=
X_{13}+aX_--bY_-+(a^2+b^2+c^2)X_{24}$
belongs to
$\mathfrak{s}\cap
(\mathfrak{a}+\mathbb{R}X_-+\mathbb{R}Y_-)$.
By Lemma~\ref{X-Y-}, we obtain $a=b=0$ and $c=1$, and the result follows.
\end{proof}

It remains to determine the possible intersections with the eigenspace
\begin{equation*}
V_2=
span\{J_WX,J_V(J_WX),J_IX\}.
\end{equation*}

We distinguish three cases according to the dimension of
$\mathfrak{s}\cap\mathfrak{z}$.

\subsubsection{Case 1: a twistor space}

Assume first that $\mathfrak{s}\cap\mathfrak{z}=\mathfrak{z}.$

By Lemma~\ref{vertical.complexo},
\begin{equation*}
J_VX,\;J_WX\in\mathfrak{s},
\end{equation*}
and Lemma~\ref{complexo} implies that
\begin{equation*}
J_V(J_WX)\in\mathfrak{s}.
\end{equation*}

On the other hand,
\begin{equation*}
J_IX\notin\mathfrak{s},
\end{equation*}
since
\begin{equation*}
R(X,J_WX)J_IX
=
-\frac32J_W(J_IX)
=
-\frac32X_-,
\end{equation*}
and $X_-\notin\mathfrak{s}$ by Lemma~\ref{X-Y-}.

Hence
\begin{equation*}
\begin{split}
\mathfrak{s}
=
\operatorname{span}\{
&
V,W,X,J_VX,J_WX,J_V(J_WX),
X_{15}+X_{26},
X_{25}-X_{16},
Y_{25}+Y_{16},
Y_{15}-Y_{26},
\dots,\\
&
X_{1(2k+1)}+X_{2(2k+2)},
X_{2(2k+1)}-X_{1(2k+2)},
Y_{2(2k+1)}+Y_{1(2k+2)},
Y_{1(2k+1)}-Y_{2(2k+2)}
\}.
\end{split}
\end{equation*}

Therefore,
\begin{equation*}
\Sigma
=
Sp(k+1)\cdot o
=
\frac{Sp(k+1)}{S^1\times Sp(k)}
=
\mathbb{C}P^{2k+1},
\end{equation*}
which is the twistor space of $\mathbb{H}P^k$ (see~\cite{JB}).

\subsubsection{Case 2: Horizontal case}

Assume now that $\mathfrak{s}\cap\mathfrak{z}=\{0\}.$

By Lemma~\ref{vertical.complexo},
\begin{equation*}
J_VX,\;J_WX\notin\mathfrak{s}.
\end{equation*}

Let
\begin{equation*}
Y=aY_{13}+bY_{24}+cJ_WX
\in
\mathfrak{s}\cap V_2.
\end{equation*}

Then
\begin{equation*}
\nabla_XR(Y,X,X)=\frac{c}{2}W,
\end{equation*}
which implies that $c=0.$
Moreover,
\begin{equation*}
R(X,aY_{13}+bY_{24})(aY_{13}+bY_{24})
=
2\sqrt2\,(a^2X_{13}+b^2X_{24})
\in
\mathfrak{s}\cap\mathfrak{a}
=
\mathbb{R}X.
\end{equation*}

Hence, $a^2=b^2,$
and therefore exactly one of the following two possibilities occurs.

\medskip

\noindent
\textbf{Case (i).}
\begin{equation*}
\mathfrak{s}\cap V_2
=
\operatorname{span}\{J_V(J_WX)\}.
\end{equation*}

If
\begin{equation*}
Y=X_{1(2k-1)}+X_{2(2k)}
\in
\mathfrak{s},
\end{equation*}
then
\begin{equation*}
R(X,J_V(J_WX))Y
=
-J_V(J_WY)
\in
\mathfrak{s}.
\end{equation*}

Consequently, $\mathfrak{s}$ is contained in the tangent space obtained in
Case~1, and therefore it is not maximal.

\medskip

\noindent
\textbf{Case (ii).}
\begin{equation*}
\mathfrak{s}\cap V_2
=
\operatorname{span}\{J_IX\}.
\end{equation*}

In this case, $\Sigma=\exp(\mathfrak{s})$ is the diagonal of
$\mathbb{C}P^k\times\mathbb{C}P^k$
(cf.~Section~\ref{cp.cp}) and hence is not maximal.

Indeed, if
\begin{equation*}
Y=X_{1(2k-1)}+X_{2(2k)}
\in
\mathfrak{s},
\end{equation*}
then
\begin{equation*}
R(X,J_IX)Y
=
-J_IY
\in
\mathfrak{s}.
\end{equation*}

Therefore,
\begin{equation*}
\mathfrak{s}
=
\operatorname{span}
\{
X_{13}+X_{24},
Y_{13}+Y_{24},
\dots,
X_{1(2k+1)}+X_{2(2k+2)},
Y_{1(2k+1)}+Y_{2(2k+2)}
\}.
\end{equation*}

\subsubsection{Case 3: One-dimensional vertical component}

Finally, assume that
\begin{equation*}
\mathfrak{s}\cap\mathfrak{z}
=
\operatorname{span}\{U_\theta\},
\mbox{ where }
U_\theta
=
\cos\theta\,V+\sin\theta\,W.
\end{equation*}

An adaptation of Lemma~\ref{vertical.complexo} shows that
\begin{equation*}
U_\theta\in\mathfrak{s}
\quad\Longrightarrow\quad
J_{U_\theta}X\in\mathfrak{s}.
\end{equation*}

Since $\mathfrak{s}$ is generated by eigenvectors of $R_X$, it follows that
\begin{equation*}
\theta=\frac{k\pi}{2},
\qquad
k\in\mathbb{Z}.
\end{equation*}

Hence, $U_\theta=V \mbox{ or }
U_\theta=W.$
Arguing exactly as in the previous cases, we obtain
\begin{equation*}
\mathfrak{s}
=
\operatorname{span}
\{
U_\theta,
X,
J_{U_\theta}X,
\dots,
X_{1(2k+1)}+X_{2(2k+2)},
J_{U_\theta}(X_{1(2k+1)}+X_{2(2k+2)})
\}.
\end{equation*}

Therefore, $\mathfrak{s}$ is contained in the tangent space obtained in
Case~1, and consequently it is not maximal.

\subsection{Non-well-positioned submanifolds}\label{nbp}

We now assume that $\mathfrak s$ is not well-positioned with respect to the twistor fibration. Our goal is to show that this situation is considerably more rigid than the well-positioned case and produces no new maximal examples.

By the results of Section~\ref{singular}, every totally geodesic submanifold contains a nonzero singular vector whenever $\mathfrak s\cap\mathfrak p\neq\{0\}$. Hence, exactly one of the following situations occurs:
\begin{equation*}
    (i)\ \mathfrak s\cap\mathfrak p=\{0\} \qquad
    (ii)\ X_{13}\in\mathfrak s \qquad
    (iii)\ \frac{\sqrt2}{2}(X_{13}+X_{24})\in\mathfrak s.
\end{equation*}

The third possibility cannot occur in the present setting, since Table~\ref{tab:2} shows that every subspace containing a singular vector of this type is automatically well-positioned. Therefore, it remains to analyze only cases~(i) and~(ii).

\subsubsection{Case (i): $\mathfrak s\cap\mathfrak p=\{0\}$}

Since $\mathfrak s$ has trivial intersection with the horizontal distribution, we necessarily have $\dim\mathfrak s=2.$
Choose an orthogonal basis
$\mathfrak s=\operatorname{span}\{Z_1,Z_2\}.$
Using the isotropy action, we may assume that
\begin{equation*}
    Z_1=V+aX_{13}+bX_{23}+cY_{23}+dX_{24},
    \qquad a\neq0.
\end{equation*}
Since $Z_2$ is diagonal and an eigenvector of $R_{Z_1}$, it follows that
\begin{equation*}
\begin{split}
    Z_2=eV+fW+gX_{13}+hY_{13}+iX_{23}+jY_{23}+kX_{14}+lY_{14}+mX_{24}+nY_{24},
\end{split}
\end{equation*}
where
$e^2+f^2>0.$

In particular, $\Sigma$ is contained in the totally geodesic submanifold
$F_{1,2}(\mathbb C^4)$. Consequently, $\Sigma$ can be maximal only when
$M=F_{1,2}(\mathbb C^4)$.

\begin{rema}
If $d=0$, then the conditions that $Z_2$ is diagonal and an eigenvector of
$R_{Z_1}$ force
\begin{equation*}
    k=l=m=n=0.
\end{equation*}
Hence, $\Sigma\subseteq F(\mathbb C^3),$
and therefore $\Sigma$ is congruent to one of the totally geodesic
submanifolds classified in \cite{RN}. In particular, it is not maximal.
\end{rema}

By the previous remark, we may assume that $d\neq0.$

To analyze this remaining situation, we apply the characterization of totally geodesic surfaces obtained in \cite{RN}. This criterion imposes strong algebraic restrictions on $Z_1$ and $Z_2$, which eventually determine them uniquely.

\begin{prop}\label{prop.surface}\cite{RN}
Let $X,Y\in\mathfrak m$ be orthogonal vectors such that $\mathfrak s=\operatorname{span}\{X,Y\}$
is totally geodesic. Then
\begin{equation*}
    \operatorname{span}\{D_X^kY:k\ge0\}
    \subseteq
    \ker(R_X-\kappa\|X\|^2\mathrm{Id})
    \cap
    \bigcap_{j=0}^{\infty}\ker(C_X^j),
\end{equation*}
where $D=\nabla-\nabla^c=\frac12[X,Y]_{\mathfrak m},$
$C_X^j(Y)=\nabla R(X,\ldots,X,Y,X)$, and $\kappa$ denotes the sectional curvature of $\mathfrak s$.
\end{prop}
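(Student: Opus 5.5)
The plan is to study $\Sigma$ along the geodesic $\gamma(t)=\exp(tX)\cdot o$ and to pull everything back to $\mathfrak m$ by left translation. Two facts drive the argument.

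The first is that $M$ is naturally reductive. Hence $\gamma$ is the orbit of a one-parameter subgroup, and $D_X=\frac12[X,\cdot\,]_{\mathfrak m}$ is skew-symmetric with $D_XX=0$. The canonical connection satisfies $\nabla^cR=0$, and $R$ is invariant under left translations. Tojo's formula, already used in Section~\ref{flags}, gives
\[
d(L_{\exp(tX)})^{-1}\tau_t=e^{-tD_X}.
\]

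The second is that $\Sigma$ is a totally geodesic surface containing $\gamma$. Therefore its tangent plane at $\gamma(t)$ is the parallel transport $\tau_t(\mathfrak s)$. Pulling back, this plane becomes
\[
\mathfrak s_t:=e^{-tD_X}\mathfrak s=\operatorname{span}\{X,Y(t)\},\qquad Y(t):=e^{-tD_X}Y.
\]
Since $D_X$ is skew, $Y(t)$ remains orthogonal to $X$ with constant norm.

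\textbf{Step 1 (eigenvector property).} Tojo's criterion says $\mathfrak s_t$ is $R$-invariant for every $t$. So $R_X Y(t)\in\mathfrak s_t$. Because $R_X$ is symmetric and $\langle R_XY(t),X\rangle=0$, this forces
\[
R_XY(t)=\kappa(t)\|X\|^2Y(t),
\]
where $\kappa(t)$ is the sectional curvature of $\Sigma$ at $\gamma(t)$. The function $t\mapsto\kappa(t)\|X\|^2$ is continuous and takes values in the finite spectrum of the fixed operator $R_X$. It is therefore constant, equal to $\kappa\|X\|^2$. Thus the whole curve $Y(t)$ lies in the eigenspace $E_\kappa:=\ker(R_X-\kappa\|X\|^2\mathrm{Id})$. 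Differentiating at $t=0$ $k$ times gives $D_X^kY\in E_\kappa$ for all $k\ge0$, which is the first inclusion.

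\textbf{Step 2 (expressing $\nabla R$ by $D_X$).} Since $\nabla=\nabla^c+D$ and $\nabla^cR=0$, the covariant derivative satisfies $\nabla_XR=D_X\cdot R$, where $D_X$ acts as a derivation on the tensor $R$. Because $D_XX=0$, all terms in which $D_X$ hits one of the $X$-slots vanish. Iterating this along $\gamma$ (equivalently, differentiating $e^{tD_X}\,R_X\,e^{-tD_X}$, which is the pull-back of the Jacobi operator along $\gamma$), I expect
\[
C_X^j=\pm\operatorname{ad}(D_X)^j(R_X),
\]
the $j$-fold commutator of $D_X$ with $R_X$.

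\textbf{Step 3 (vanishing of the $C_X^j$).} Let $S:=\operatorname{span}\{D_X^kY:k\ge0\}$. This subspace is $D_X$-invariant, and by Step 1 it lies in $E_\kappa$, where $R_X=\kappa\|X\|^2\mathrm{Id}$. On $S$, every commutator $[D_X,\,\cdot\,]$ applied to an operator that restricts to a scalar and preserves $S$ gives zero. By induction on $j$, $\operatorname{ad}(D_X)^j(R_X)$ vanishes on $S$. Hence $S\subseteq\bigcap_j\ker C_X^j$.

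The main obstacle is Step 2. One must check carefully that the higher covariant derivatives $\nabla^jR(X,\dots,X,\cdot,X)$ coincide, up to sign, with the iterated commutators $\operatorname{ad}(D_X)^j(R_X)$. This requires tracking how $D$ acts on each slot and using $D_XX=0$ at every stage. I would first do this bookkeeping for $j=1$ directly from $\nabla=\nabla^c+D$, and then argue by differentiating the identity $e^{tD_X}R_Xe^{-tD_X}=\tau_t^{-1}\circ R_{\dot\gamma(t)}\circ\tau_t$ repeatedly at $t=0$.
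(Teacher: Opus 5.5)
Your proof is correct. The paper gives no argument for this proposition; it is simply quoted from \cite{RN}, so there is no internal proof to compare against. Your route is the natural self-contained one: you pull the tangent planes of $\Sigma$ along $\gamma(t)=\exp(tX)\cdot o$ back to $e^{-tD_X}\mathfrak s$, use their $R$-invariance, and let the finite spectrum of the fixed operator $R_X$ force the eigenvalue $\kappa(t)\|X\|^2$ to be constant. Step~1 is complete as written, and this constancy argument is exactly where homogeneity enters.

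There are two points to tighten.

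\textbf{Step~2 is routine.} Every $\nabla^{j-1}R$ is $G$-invariant, hence $\nabla^c$-parallel, so $\nabla^{j}R=D\cdot\nabla^{j-1}R$, with $D_Z$ acting as a derivation on all slots. Evaluate at $(X,\dots,X;Y,X)X$ and use $D_XX=0$. Only two terms survive, giving $C_X^{j}=D_X\circ C_X^{j-1}-C_X^{j-1}\circ D_X$. Hence $C_X^j=\operatorname{ad}(D_X)^j(R_X)$ exactly, with a plus sign for $D=\nabla-\nabla^c$. Your alternative via $\tau_t^{-1}\circ R_{\dot\gamma(t)}\circ\tau_t=e^{tD_X}R_Xe^{-tD_X}$ gives the same result. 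Since $\dot\gamma$ is parallel, the $t$-derivatives at $0$ are precisely $(\nabla^j_{X,\dots,X}R)(\cdot,X)X$.

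\textbf{Step~3 must start at $j=1$.} For $j=0$ the operator is $R_X$ itself, which acts on $S$ as $\kappa\|X\|^2\,\mathrm{Id}$, not as $0$. So if $C_X^0$ is read as $R_X$, the intersection in the statement can only run over $j\ge 1$; including $j=0$ would force $\kappa=0$. This matches how the paper uses $\ker C_{Z_1}$ in the non-well-positioned analysis, as the kernel of the first covariant derivative. State explicitly which range of $j$ your argument covers. With that proviso, your induction (an operator vanishing on the $D_X$-invariant space $S$ has commutator with $D_X$ vanishing on $S$) is fine.
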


Using this criterion, we compute
\begin{equation*}
\begin{split}
[Z_1,[Z_1,Z_2]_{\mathfrak m}]_{\mathfrak m}
={}&-Z_2+eZ_1-e\|Z_1^h\|^2V+\left(-ah+bj-ci+dn-f(\|Z_1^h\|^2-1)\right)W\\
&-fJ_VJ_WZ_1^h
+(bg+ch-ai+dk)J_VZ_1^h+(bh-cg+aj+dl)J_WZ_1^h\\
={}&-Z_2+eZ_1+\alpha V+\beta W+\gamma J_VJ_WZ_1^h
+\delta J_VZ_1^h+\epsilon J_WZ_1^h.
\end{split}
\end{equation*}

Hence,
\begin{equation*}
C_{Z_1}
\left(
\alpha V+\beta W+\gamma J_VJ_WZ_1^h
+\delta J_VZ_1^h+\epsilon J_WZ_1^h
\right)=0.
\end{equation*}

Denote by $(X_{ij})$ the equation
\begin{equation*}
\left\langle
C_{Z_1}
(\alpha V+\beta W+\gamma J_VJ_WZ_1^h+\delta J_VZ_1^h+\epsilon J_WZ_1^h),
X_{ij}
\right\rangle=0.
\end{equation*}

This yields the system
\begin{equation*}
\begin{cases}
a\alpha-b\delta=0 &(X_{14})\\
b\alpha-c\beta+a\delta=0 &(X_{24})\\
(-3+a^2)\beta-3(b^2+c^2+d^2)\gamma-2ac\delta-3ab\epsilon=0 &(Y_{14})\\
2ac\alpha+2ab\beta+3ab\gamma+a^2\delta-3(b^2+c^2+d^2)\epsilon=0 &(Y_{24})\\
3c\beta+3c(b^2+c^2+d^2)\gamma+a(3c^2+d^2)\delta+3abc\epsilon=0 &(X_{13}).
\end{cases}
\end{equation*}

We now solve this system step by step.

Using equations $(X_{14})$ and $(X_{24})$, we obtain
\begin{equation*}
    \alpha=\frac{bc}{a^2+b^2}\beta,
    \qquad
    \delta=\frac{ac}{a^2+b^2}\beta.
\end{equation*}

Substituting these expressions into $(X_{13})$ gives
\begin{equation*}
c\left(
\left(
3+\frac{a^2(3c^2+d^2)}{a^2+b^2}
\right)\beta
+
3(b^2+c^2+d^2)\gamma
+
3ab\epsilon
\right)=0.
\end{equation*}

Combining this equation with $(Y_{14})$ yields $c=0$ or $\beta=0$.
If $\beta=0$, then $\alpha=\gamma=\delta=\epsilon=0$, which implies $e=f=0$, contradicting $e^2+f^2>0$.
Therefore, $c=0$.
Consequently,
\[
g=i=k=m=0,
\qquad
\beta\neq0,
\qquad
\alpha=\delta=0,
\]
which implies $e=0$.
After rescaling, we may assume $f=1$.
Moreover,
\[
\gamma=
\frac{-3(b^2+d^2)-a^2b^2+a^2d^2}
{3((b^2+d^2)^2+a^2b^2)}
\beta,
\mbox{ and }
\epsilon=
\frac{ab(2b^2+2d^2-3+a^2)}
{3((b^2+d^2)^2+a^2b^2)}
\beta.
\]
Therefore,
\[
\ker(C_{Z_1})
\cap
\operatorname{span}
\{V,W,J_VJ_WZ_1^h,J_VZ_1^h,J_WZ_1^h\}
\]
is one-dimensional.

Next, observe that
\[
\begin{split}
[Z_1,[Z_1,[Z_1,Z_2]_{\mathfrak m}]_{\mathfrak m}]_{\mathfrak m}
={}&-[Z_1,Z_2]_{\mathfrak m}
-|Z_1^h|^2\epsilon W
-\epsilon J_VJ_WZ_1^h
+(\beta+\gamma)J_WZ_1^h\\
={}&-[Z_1,Z_2]_{\mathfrak m}
+\beta'W+\gamma'J_VJ_WZ_1^h+\epsilon'J_WZ_1^h,
\end{split}
\]
which also belongs to $\ker(C_{Z_1})$. Hence,
\[
\begin{cases}
\beta'=\lambda\beta,\\
\gamma'=\lambda\gamma,\\
\epsilon'=\lambda\epsilon,
\end{cases}
\]
from which we conclude that
$\lambda=0$, $\epsilon=0$, and $\beta=-\gamma$.

Substituting these relations into equation $(Y_{24})$, we obtain
\[
2ab\beta+3ab\gamma=0,
\]
which implies $b=0$.
Therefore, $j=l=0$, and
\[
\gamma=
\frac{-3+a^2}{3d^2}\beta
=-\beta,
\]
so that $-3+a^2+3d^2=0.$

Furthermore, since both
$Z_2
 \mbox{ and }
[Z_1,[Z_1,Z_2]_{\mathfrak m}]_{\mathfrak m}
=-Z_2+\beta W-\beta J_VJ_WZ_1^h$ belong to the same eigenspace of $R_{Z_1}$, Proposition~\ref{prop.surface} implies that
$W-J_VJ_WZ_1^h$ is also an eigenvector of $R_{Z_1}$.
Since
\[
\begin{split}
R_{Z_1}(W-J_VJ_WZ_1^h)
={}
2(2-a^2-d^2)W-2(-1+2a^2)aY_{13}
+2(-1+2d^2)dY_{24},
\end{split}
\]
we conclude that
$a^2=d^2=\frac34$.

Finally,
\[
Z_1=V+aX_{13}+dX_{24},
\qquad
Z_2=W+hY_{13}+nY_{24}.
\]
The conditions that $Z_2$ is an eigenvector of $R_{Z_1}$ and belongs to
$\ker(C_{Z_1})$ imply
$h=-a$ and $n=d$.

If $a=d=\frac{\sqrt3}{2}$, then
$\Sigma\subseteq\mathbb CP^3$
(see Section~\ref{caso3}) and therefore $\Sigma$ is not maximal.

On the other hand, if $d=-a=-\frac{\sqrt3}{2}$, then, after applying the isotropy action,
\[
Z_1=V+aX_{14}-aX_{23},
\qquad
Z_2=W-aY_{14}-aY_{23},
\]
and again
$\Sigma\subseteq\mathbb CP^3$.

In either case, $\Sigma$ is contained in a totally geodesic copy of $\mathbb CP^3$ and therefore cannot be maximal.

\subsubsection{Case (ii): $X_{13}\in\mathfrak s$}

Assume that $X=X_{13}\in\mathfrak s.$
In this situation, the intersection of $\mathfrak s$ with the eigenspace
$V_{1/4}$ determines whether $\mathfrak s$ is well-positioned.

\begin{prop}
If $X=X_{13}\in\mathfrak s$ and $\mathfrak s$ is not well-positioned, then
$\dim(\mathfrak s\cap\mathfrak p\cap V_{1/4})=0.$
\end{prop}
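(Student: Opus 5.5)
We argue by contradiction. Suppose $X=X_{13}\in\mathfrak s$, that $\mathfrak s$ is not well-positioned, and that $\mathfrak s\cap\mathfrak p\cap V_{1/4}$ contains a nonzero vector. Up to the action of $H$ (the stabiliser of $X_{13}$ in $H$ acts on $\mathfrak z$ by rotations, compatibly with $U\mapsto J_UX$), we may take this vector to be $J_{U_\theta}X$ for some unit $U_\theta\in\mathfrak z$, and after rotating, $J_VX\in\mathfrak s$. The goal is to show that $\mathfrak s$ then splits as $(\mathfrak s\cap\mathfrak z)\oplus(\mathfrak s\cap\mathfrak p)$.

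\textbf{Step 1: the eigenspace $V_{1/4}$.} The space $\mathfrak s$ is $R_X$-invariant, so it splits along the eigenspaces of Table~\ref{tab:1}. In particular
\[
\mathfrak s=\bigoplus_{\lambda}(\mathfrak s\cap V_\lambda),
\]
and all eigenspaces other than $V_{1/4}$ are horizontal. Hence failure of well-positionedness can only occur inside $\mathfrak s\cap V_{1/4}$. This means $\mathfrak s\cap V_{1/4}$ contains a vector $U+J_{U'}X$ with $U,U'\in\mathfrak z$ both nonzero that is not a sum of elements of $\mathfrak s\cap\mathfrak z$ and $\mathfrak s\cap\mathfrak p$.

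\textbf{Step 2: the vertical part lies in $\mathfrak s$.} Since $J_VX\in\mathfrak s$, we would apply Lemma~\ref{l.tojo} with $J_VX\in V_{1/4}$ and $Z=X$. Exactly as in the proof of Lemma~\ref{autovetor}, the vector $-\nabla_{J_VX}X=\tfrac12[J_VX,X]_{\mathfrak m}$ is a nonzero multiple of $V$. Its projection onto $V_{1/4}$ is therefore a multiple of $V$, but Lemma~\ref{l.tojo} only gives projections onto eigenvalues $\mu\neq\lambda$. So the projection argument fails, and instead we would use $\nabla R$ directly: compute $\nabla_{J_VX}R(X,X,X)$-type terms, or more simply $R(J_VX,X)X$ combined with $R(J_VX,U+J_{U'}X)J_VX$. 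Using \eqref{curv.D}, together with the relations $R(J_VX,V)J_WX=-\tfrac34W$ and $\nabla_ZR(J_VX,X,X)=\tfrac34\sin t\,J_VJ_WX$ already computed in Subsection~\ref{flags}, we would show that the vertical and horizontal parts of $U+J_{U'}X$ are separated. These operators act with different coefficients on $\mathfrak z$ and on $J_{\mathfrak z}X$: on $\mathfrak z$ the curvature contributes the factor $1/4$, while on $J_{\mathfrak z}X$ the O'Neill term contributes $-3/4$. Consequently both $U$ and $J_{U'}X$ lie in $\mathfrak s$, contradicting non-well-positionedness.

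\textbf{Step 3: using parallel transport.} Alternatively, parallel transport $\tau_t$ along $\exp(tX)$ rotates the planes $\operatorname{span}\{V,J_VX\}$ and $\operatorname{span}\{W,J_WX\}$ simultaneously, and $\tau_t^{-1}\mathfrak s$ must remain $R$-invariant by Theorem~\ref{Tojo}. Take a vector $U+J_{U'}X$ in $\mathfrak s\cap V_{1/4}$ with $U\not\parallel U'$ or with generic angle. Its transported images, together with the fixed $J_VX$, span a space that, by Step 2 applied at each $t$, must itself split. Comparing two values of $t$ then gives the splitting at $t=0$.

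\textbf{Main obstacle.} The delicate point is the explicit curvature computation in Step 2. One must find an $R$- or $\nabla R$-expression that isolates the vertical component of a mixed vector $U+J_{U'}X$ in the presence of $J_VX$. I would first try $R(J_VX,Y)J_VX$ with $Y=U+J_{U'}X$, expecting the $\mathfrak z$-part and the $\mathfrak p$-part to be scaled by different constants (coming from $1/4$ versus $-3/4$). Subtracting a multiple of $Y$ then leaves a nonzero pure vertical or pure horizontal vector in $\mathfrak s$, which forces the splitting.
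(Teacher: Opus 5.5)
Your final plan, applying the Jacobi operator of $J_VX$ to a mixed vector of $\mathfrak s\cap V_{1/4}$, works. It takes a different route from the paper. However, you never carry out the decisive computation, and the constants you predict are wrong.

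With $X=X_{13}$ one has $J_VX=X_{23}$ and $J_WX=-Y_{23}$. So $J_VX$ is itself a singular vector of the type of $X_{13}$ (conjugate to it by an element of $K$ normalizing $H$), and $J_WX$ plays for $X_{23}$ the role that $Y_{13}$ plays for $X_{13}$. A direct check with \eqref{curv.D} gives
\[
R_{J_VX}(V)=\tfrac14V,\qquad R_{J_VX}(W)=\tfrac14W,\qquad R_{J_VX}(J_VX)=0,\qquad R_{J_VX}(J_WX)=4J_WX ,
\]
which is Table~\ref{tab:1} read at $X_{23}$. The O'Neill term $-\tfrac34$ plays no role, since $J_WX\perp J_V(J_VX),\,J_W(J_VX)$.

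Now take $Y=aV+bW+\alpha J_VX+\beta J_WX\in\mathfrak s$. Then
\[
R_{J_VX}(Y)-\tfrac14Y=-\tfrac{\alpha}{4}J_VX+\tfrac{15}{4}\beta J_WX\in\mathfrak s .
\]
Since $J_VX\in\mathfrak s$, this puts $\beta J_WX$ in $\mathfrak s$. Hence the horizontal part of $Y$ lies in $\mathfrak s$, and so does its vertical part. Together with your Step~1 this yields well-positionedness, which is the desired contradiction. The $\nabla R$ remarks in Step~2 and the parallel-transport Step~3 are unnecessary.

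The paper argues differently. It dismisses $\dim(\mathfrak s\cap\mathfrak p\cap V_{1/4})=2$ as trivially well-positioned. In the one-dimensional case it writes the complementary vector as $Z=aV+bW+cJ_WX$ and uses the mixed term $R(X,J_VX)Z=-\tfrac34c\,J_VJ_WX$ to force $c=0$. That deduction tacitly needs $J_VJ_WX=Y_{13}\notin\mathfrak s$. This does hold: otherwise $R(X_{13},Y_{13})X_{23}=-\tfrac32Y_{23}$ would put $J_WX$ in $\mathfrak s$.

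Your Jacobi-operator route avoids that auxiliary exclusion and treats both dimensions in one step. Its only cost is identifying the spectrum of $R_{J_VX}$ on $V_{1/4}$, which the conjugacy $X_{23}\sim X_{13}$ hands you for free.
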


\begin{proof}
If
$\dim(\mathfrak s\cap\mathfrak p\cap V_{1/4})=2,$
then $\mathfrak s$ is automatically well-positioned, since it is generated by
eigenvectors of $R_X$ and
$V_{1/4}
=
\operatorname{span}
\{V,W,J_VX,J_WX\}.$

Suppose instead that
$\dim(\mathfrak s\cap\mathfrak p\cap V_{1/4})=1,$
and, up to the isotropy action, assume that
$J_VX\in\mathfrak s.$
Let
\begin{equation*}
Z=aV+bW+cJ_WX
\in
\mathfrak s\cap V_{1/4}.
\end{equation*}

Then
\begin{equation*}
R(X,J_VX)Z
=
-\frac34c\,J_VJ_WX.
\end{equation*}

Since $\mathfrak s$ is $R$-invariant, we must have $c=0$, so that
$Z\in\operatorname{span}\{V,W\}.$
Hence $\mathfrak s$ is well-positioned, contradicting the assumption.
\end{proof}

The same argument also shows that, if $\mathfrak s$ is not well-positioned,
then
\begin{equation*}
\dim(\mathfrak s\cap\mathfrak p\cap V_{1/4})
=
\dim(\mathfrak s\cap\mathfrak z\cap V_{1/4})
=
0.
\end{equation*}

Therefore, $\mathfrak s\cap V_{1/4}$
is generated entirely by diagonal vectors.

We now distinguish according to the dimension of
$\mathfrak s\cap V_{1/4}$.

\paragraph{The two-dimensional case.}

Assume that
$\mathfrak s\cap V_{1/4}
=
\operatorname{span}\{Z_1,Z_2\},$ where
\begin{equation*}
Z_1
=
V+\alpha J_VX+\delta J_WX,
\qquad
Z_2
=
W+\beta J_VX+\gamma J_WX.
\end{equation*}

Using the curvature formula (\ref{curv.D}), we obtain

\begin{equation*}
R(X,Z_1)Z_2
=
\frac34
(1-\alpha\gamma+\beta\delta)
J_VJ_WX
+
\frac14
(\alpha\beta+\gamma\delta)X.
\end{equation*}

Suppose first that
\begin{equation*}
1-\alpha\gamma+\beta\delta\neq0.
\end{equation*}

Then $J_VJ_WX\in\mathfrak s.$
Moreover,
\begin{equation*}
R(X,Z_1)J_VJ_WX
=
-\frac34W
+\frac34\alpha J_WX
-\frac34\delta J_VX,
\end{equation*}

which implies
\begin{equation}\label{eq.jvjw}
\gamma=-\alpha,
\qquad
\beta=\delta.
\end{equation}

On the other hand,
\begin{equation*}
\nabla_{Z_1}R(J_VJ_WX,X,X)
=
\frac34(-J_WX-\alpha W+\delta V)
\in\mathfrak s.
\end{equation*}

Hence it must be a linear combination of $Z_1$ and $Z_2$, namely,
\begin{equation*}
\nabla_{Z_1}R(J_VJ_WX,X,X)
=
\delta Z_1-\alpha Z_2.
\end{equation*}

Using (\ref{eq.jvjw}), we obtain
\begin{equation*}
\begin{cases}
0=\alpha(\beta-\delta),\\
\dfrac34
=
-(\delta^2+\alpha^2),
\end{cases}
\end{equation*}

which is impossible.

Therefore,
\begin{equation}\label{eq.estrela}
1-\alpha\gamma+\beta\delta=0.
\end{equation}

Furthermore,
\begin{equation*}
\nabla_{Z_2}R(Z_1,X,X)
=
\frac34(\alpha+\gamma)J_VJ_WX,
\end{equation*}

and hence $\gamma=-\alpha.$

Equation (\ref{eq.estrela}) becomes $1+\alpha^2+\beta\delta=0,$ or equivalently, $\beta
=
-\frac{1+\alpha^2}{\delta},
\delta\neq0.$

Thus, $Z_1=V+\alpha J_VX+\delta J_WX,$ and $Z_2=W-\frac{1+\alpha^2}{\delta}J_VX-\alpha J_WX.$
Since $\mathfrak s$ is $R$-invariant,
$R_{Z_1}(Z_2)\in\operatorname{span}\{Z_1,Z_2\}.$
A direct computation gives
\begin{equation*}
\begin{split}
R_{Z_1}(Z_2)=&
\frac{1+\alpha^4+25\delta^2+\alpha^2(2+\delta^2)}
{4\delta^2}V
+
\frac{\alpha^3\delta+\alpha(\delta+\delta^3)}
{4\delta^2}W\\
&
-6\alpha J_VX
+
\frac{25+25\alpha^2+\delta^2}
{4\delta}J_WX.
\end{split}
\end{equation*}

Comparing with the decomposition in the basis $\{Z_1,Z_2\}$ yields $\alpha=0,
\delta^2=1.$

Therefore,
\begin{equation*}
Z_1=V+\delta J_WX,
\qquad
Z_2=W-\delta J_VX,
\qquad
\delta=\pm1.
\end{equation*}

At this stage, $X,Z_1,Z_2\in\mathfrak s,$ whereas
$J_VX,
J_WX,
J_VJ_WX
\notin\mathfrak s.$

Assume now that
$\dim\mathfrak s>3.$
Since $\mathfrak s$ is generated by eigenvectors of $R_X$, Table~\ref{tab:2}
implies the existence of a horizontal vector
\begin{equation*}
Y
=
\sum_{j=4}^{n+2}
a_{ij}X_{ij}
+
b_{ij}Y_{ij},
\qquad
i=1
\text{ or }
2,
\end{equation*}

belonging to $\mathfrak s$.
However,
\begin{equation}\label{eq.Y}
\nabla_YR(Z_1,X,X)
=
\begin{cases}
-\dfrac18J_VY,
&
i=1,
\\[1ex]
\dfrac38J_VY,
&
i=2.
\end{cases}
\end{equation}

Since
$[Y,J_VY]_{\mathfrak h}=0,$
Lemma~\ref{complexo} implies that
$\mathfrak s$ is $J_V$-complex, forcing $J_VX\in\mathfrak s,$
a contradiction.

Consequently, $\mathfrak s=\operatorname{span}\{X,Z_1,Z_2\}.$
In particular, $\Sigma\subseteq F(\mathbb C^3),$
and therefore $\Sigma$ is not maximal.

\paragraph{The one-dimensional case.}

Assume now that $\mathfrak s\cap V_{1/4}=\operatorname{span}\{Z\},$
where, up to the isotropy action,
\begin{equation*}
Z
=
V+aJ_VX+bJ_WX.
\end{equation*}

We first show that $J_VJ_WX\notin\mathfrak s.$
Indeed, otherwise,
\begin{equation*}
\nabla_ZR(J_VJ_WX,X,X)
=
\frac34(bV-aW-J_WX)
\in\mathfrak s.
\end{equation*}
Since this vector must be proportional to $Z$, we obtain a contradiction.
Hence $J_VJ_WX\notin\mathfrak s.$

Finally, applying the same argument used in (\ref{eq.Y}), we conclude that
\begin{equation*}
\mathfrak s
=
\operatorname{span}\{X,Z\}.
\end{equation*}

Therefore, $\Sigma\subseteq F(\mathbb C^3),$
and once again $\Sigma$ is not maximal.
This completes the proof that, for $n\ge2$, every maximal totally geodesic submanifold of
\begin{equation*}
M
=
SU(n+2)
/S(U(1)\times U(1)\times U(n))
\end{equation*}
is necessarily well-positioned with respect to the twistor fibration.

\section{Further classification results}\label{further}

In this section we prove the two classification results stated in the Introduction.

\begin{proof}[Proof of Theorem~\ref{tg.cp2k+1}]
Let $\Sigma$ be a maximal connected complete totally geodesic submanifold of
$\mathbb{C}P^{2k+1}$ passing through the origin, and let
\[
\mathfrak{s}=T_o\Sigma.
\]
Since $\mathbb{C}P^{2k+1}$ arises as the twistor space described in case~(3) of
Theorem~\ref{Teorema}, the possible tangent spaces are precisely those obtained
in our classification.

Assume first that
$\mathfrak{s}\cap\mathfrak p\neq\{0\}.$
By Theorem~\ref{Teorema}, we may assume that
\[
X_1=\frac{\sqrt2}{2}(X_{13}+X_{24})\in\mathfrak{s},
\]
and define
\[
X_j=\frac{\sqrt2}{2}(X_{1(2j+1)}+X_{2(2j+2)}).
\]

According to the possible vertical components of $\mathfrak{s}$, three cases may occur.

\begin{itemize}

\item If $\mathfrak{s}\cap\mathfrak{z}=\mathfrak{z},$ then
\begin{equation*}
\begin{split}
\mathfrak{s}
=
span\{
V,W,
X_1,J_VX_1,J_WX_1,J_VJ_WX_1,\ldots,
X_{k-1},J_VX_{k-1},J_WX_{k-1},J_VJ_WX_{k-1}
\},
\end{split}
\end{equation*}
and hence
\[
\Sigma
=
Sp(k).o
=
\frac{Sp(k)}{S^1\times Sp(k-1)}
=
\mathbb{C}P^{2k-1}.
\]

\item If $\mathfrak{s}\cap\mathfrak{z}=span\{V\},$ then
\[
\mathfrak{s}
=
span\{V,X_1,J_VX_1,\ldots,X_k,J_VX_k\},
\]
and therefore
\[
\Sigma
=
U(k+1).o
=
\frac{U(k+1)}{\mathbb{Z}_2\times U(k)}
=
\frac{\mathbb{S}^{2k+1}_{\mathbb{C},1/2}(\sqrt2)}{\mathbb{Z}_2}
=
\mathbb{R}P^{2k+1}_{\mathbb{C},1/2}(\sqrt2).
\]
The corresponding involutive isometry is induced by complex conjugation with
respect to the quaternionic units $i$ and $k$.

\item If $\mathfrak{s}\cap\mathfrak{z}=\{0\},$ then
\[
\mathfrak{s}
=
span\{X_1,J_VJ_WX_1,\ldots,X_k,J_VJ_WX_k\},
\]
and consequently
\[
\Sigma
=
U(k+1).o
=
\frac{U(k+1)}{U(1)\times U(k)}
=
\mathbb{C}P^k.
\]
The corresponding involutive isometry is induced by conjugation with respect to
the quaternionic unit $j$.

\end{itemize}

Finally, if
$\mathfrak{s}\cap\mathfrak p=\{0\},$
then $\Sigma$ cannot be maximal. Indeed, in this case $\Sigma$ is either a
fiber of the twistor fibration or one of the non-well-positioned
submanifolds described in Section~\ref{nbp}, both of which are contained in
$\mathbb{C}P^3$.
\end{proof}

\begin{proof}[Proof of Theorem~\ref{real.flag}]
By the classification of maximal totally geodesic submanifolds of the real
Stiefel manifold obtained in \cite{GKR}, there are three possible tangent
spaces for a maximal totally geodesic submanifold.

Since the canonical double covering
\[
V_2(\mathbb{R}^{n+2})
\longrightarrow
F_{1,2}(\mathbb{R}^{n+2})
\]
is a local isometry, these tangent spaces also determine the maximal totally
geodesic submanifolds of the real partial flag manifold.

\begin{itemize}

\item If
$\mathfrak{s}
=
span\{V,X_{13},X_{23},\ldots,X_{1(n+1)},X_{2(n+1)}\},$ then
\[
\Sigma
=
SO(n+1).o
=
\frac{SO(n+1)}
{S(O(1)\times O(1)\times O(n-1))}
=
F_{1,2}(\mathbb{R}^{n+1}).
\]

\item If
$\mathfrak{s}
=
span\{X_{13},\ldots,X_{1(p+2)},
X_{2(p+3)},\ldots,X_{2(p+q+2)}\},$ then
\[
\Sigma
=
(SO(p+1)\times SO(q+1)).o
=
\frac{SO(p+1)\times SO(q+1)}
{S(O(1)\times O(p))\times S(O(1)\times O(q))}
=
\mathbb{R}P^p\times\mathbb{R}P^q.
\]

\item If
$\mathfrak{s}
=
span\{
V,
X_{13}+X_{24},
X_{23}-X_{14},
\ldots,
X_{1(2k+1)}+X_{2(2k+2)},
X_{2(2k+1)}-X_{1(2k+2)}
\},$
then
\[
\Sigma
=
U(k+1).o
=
\frac{\mathbb{S}^{2k+1}_{\mathbb{C},1/2}(\sqrt2)}
{\mathbb{Z}_2}
=
\mathbb{R}P^{2k+1}_{\mathbb{C},1/2}(\sqrt2).
\]

\end{itemize}
\end{proof}

\bibliographystyle{amsplain} 
\bibliography{referencias} 

\end{document}